\documentclass[letterpaper,10pt]{article}  

\pdfoutput=1 
 

\usepackage[utf8]{inputenc}
\usepackage{amsthm} 
\usepackage[fleqn]{amsmath} 
\usepackage{nccmath} 
\usepackage{amssymb} 
\usepackage[dvipsnames]{xcolor} 
\usepackage{algorithm,algorithmic}
\usepackage[hyphens]{url}

\usepackage[margin=1in]{geometry} 

\usepackage{graphicx} 
\usepackage{subcaption} 

\usepackage{makecell} 
\usepackage{multirow} 
\usepackage{tabu} 
\setlength{\extrarowheight}{0.2em} 

\usepackage{cite}


\newcommand\numberthis{\addtocounter{equation}{1}\tag{\theequation}} 

\newcommand{\Rbb}{\mathbb{R}} 
\newcommand{\Ebb}{\mathbb{E}} 
\newcommand{\Gcal}{\mathcal{G}} 
\newcommand{\Vcal}{\mathcal{V}} 
\newcommand{\Ecal}{\mathcal{E}} 
\newcommand{\Ncal}{\mathcal{N}} 
\newcommand{\Scal}{\mathcal{S}} 
\newcommand{\Ucal}{\mathcal{U}} 


\newcommand{\ones}{\mathbf{1}} 
\newcommand{\zeros}{\mathbf{0}} 
\newcommand{\Exp}[1]{\Ebb{\left[#1\right]}} 

\newcommand{\range}{\text{range}}


\newcommand\norm[1]{\left\lVert#1\right\rVert} 
\newcommand\normtwo[1]{\left\lVert#1\right\rVert_2} 
\newcommand\abs[1]{\left|#1\right|} 
\newcommand\paren[1]{\left(#1\right)} 
\newcommand\Sbraces[1]{\left[#1\right]} 
\newcommand\Cbraces[1]{\left\lbrace#1\right\rbrace} 
\newcommand\Abraces[1]{\left\langle#1\right\rangle} 

\DeclareMathOperator*{\argmax}{argmax} 

\DeclareMathOperator*{\maximize}{maximize}
\newcommand{\st}{\text{subject to}}

\newcommand{\ram}{\rightarrow} 

\newcommand{\algindent}{\hspace{\algorithmicindent}} 

\newcommand{\MYSTATE}{\vspace{0.2em}\STATE} 

\renewcommand{\AA}{{A^+ A}} 
\newcommand{\LL}{{\Lambda^+ \Lambda}} 
\newcommand\normAA[1]{\left\lVert#1\right\rVert_A} 
\newcommand{\SM}{{\text{SM}}} 
\newcommand\normSM[1]{\left\lVert#1\right\rVert_{\SM}} 
\newcommand{\SMNO}{{\text{SMNO}}} 
\newcommand\normSMNO[1]{\left\lVert#1\right\rVert_{\SMNO}} 
\newcommand{\sign}{\text{sign}}

\newcommand{\ts}{\textsuperscript} 

\newtheoremstyle{mytheoremstyle} 
    {\topsep}                    
    {\topsep}                    
    {\normalfont}                   
    {}                           
    {\bfseries}                   
    {.}                          
    {.5em}                       
    {}              
\theoremstyle{mytheoremstyle}

\newtheorem{theorem}{Theorem}

\newtheorem{definition}{Definition}
\newtheorem{lemma}{Lemma}

\newtheorem{remark}{Remark}




\title{Pick your Neighbor: Local Gauss-Southwell Rule for Fast Asynchronous Decentralized Optimization}
\author{Marina Costantini$^{1,*}$ \and Nikolaos Liakopoulos$^2$ \and Panayotis Mertikopoulos$^3$ \and Thrasyvoulos Spyropoulos$^{1,4}$}
\date{\normalsize{
    $^1$EURECOM, Sophia Antipolis\\%
    $^2$Amazon, Luxembourg City\\
    $^3$Univ. Grenoble Alpes, CNRS, Inria, Grenoble INP, LIG \& Criteo AI Lab\\
    $^4$Technical University of Crete\\
    $^*$Correspondence: marina.costantini@eurecom.fr } }

\begin{document}

\maketitle

\begin{abstract}

    In decentralized optimization environments, each agent $i$ in a network of $n$ nodes has its own private function $f_i$, and nodes communicate with their neighbors to cooperatively minimize the aggregate objective $\sum_{i=1}^n f_i$.
    In this setting, synchronizing the nodes' updates incurs significant communication overhead and computational costs, so much of the recent literature has focused on the analysis and design of \textit{asynchronous} optimization algorithms, where agents activate and communicate at arbitrary times without needing a global synchronization enforcer.
    However, most works assume that when a node activates, it selects the neighbor to contact based on a fixed probability (e.g., uniformly at random), a choice that ignores the optimization landscape at the moment of activation.
    Instead, in this work we introduce an optimization-aware selection rule that chooses the neighbor providing the highest \emph{dual cost improvement} (a quantity related to a dualization of the problem based on consensus).
    This scheme is related to the coordinate descent (CD) method with the Gauss-Southwell (GS) rule for coordinate updates;
    in our setting however, \textit{only a subset of coordinates is accessible at each iteration} (because each node can communicate only with its neighbors), so the existing literature on GS methods does not apply.
    To overcome this difficulty, we develop a new analytical framework for~smooth and strongly convex $f_i$ that covers the class of \emph{set-wise CD} algorithms --a class that directly applies to decentralized scenarios, but is not limited to them-- and we show that the proposed set-wise GS rule achieves a speedup factor of up to the maximum degree in the network (which is in the order of $\Theta(n)$ for highly connected graphs).
    The speedup predicted by our analysis is validated in numerical experiments with synthetic data.

\end{abstract}

\section{Introduction}

    Many timely applications require solving optimization problems over a network where nodes can only communicate with their direct neighbors.
    This may be due to the need of distributing storage and computation loads (e.g. training large machine learning models \cite{lian2017can}), or to avoid transferring data that is naturally collected in a decentralized manner, either due to the communication costs or to privacy reasons (e.g. sensor networks \cite{wan2009event}, edge computing \cite{alrowaily2018secure}). 

    Specifically, we consider a setting where the nodes want to solve the decentralized optimization problem
    \begin{ceqn}
    \begin{equation} \label{eq:primal_unconstrained}
        \underset{\theta \in \Rbb^d}{\text{minimize}} \quad \sum_{i=1}^n f_i(\theta),
    \end{equation}
    \end{ceqn}
    where each local function $f_i$ is known only by node $i$ and nodes can exchange optimization values (parameters, gradients) but \emph{not} the local functions themselves.
    We represent the communication network as a graph $\Gcal = (\Vcal,\Ecal)$ with $n = |\Vcal|$ nodes (agents) and $E = |\Ecal|$ edges, which are the links used by the nodes to communicate with their neighbors. 
    
    Problem \eqref{eq:primal_unconstrained} was formally introduced in \cite{nedic2007rate} and widely studied ever since.
    A convenient reformulation often adopted in the literature assigns to each node a local variable $\theta_i$ and forces consensus between node pairs connected by an edge:
    
    \begin{ceqn}
    \begin{subequations} \label{eq:primal_constrained}
        \begin{alignat}{2} 
        & \underset{\theta_1,\ldots,\theta_n \in R^d}{\text{minimize}} \quad && \sum_{i=1}^n f_i(\theta_i) \label{eq:problem-a} \\ 
        & \st \quad && \theta_i = \theta_j \quad \forall \; (i,j) \equiv \ell \in \mathcal{E}, \label{eq:problem-b}
        \end{alignat} 
    \end{subequations}
    \end{ceqn}
    where $\ell \equiv (i,j)$ indicates that edge $\ell \in \Ecal$ links nodes $i$~and~$j$.
    Decentralized algorithms to solve \eqref{eq:primal_constrained} allow all nodes to~find the minimum value of \eqref{eq:primal_unconstrained} by just communicating with their neighbors and updating their local variables.
    This is in contrast with broadcast AllReduce algorithms \cite{rabenseifner2004optimization} or parallel distributed architectures \cite{xiao2019dscovr}, which were recently shown to be slower than decentralized schemes in some scenarios \cite{lian2017can}.
    
    Here we use reformulation \eqref{eq:primal_constrained} to propose an \emph{asynchronous} decentralized algorithm where nodes activate at any time uniformly at random, and once activated they choose one of their neighbors to make an update.
    Methods with such minimal coordination requirements avoid incurring extra costs of synchronization  that may also slow down convergence, which is the reason why many algorithms for this asynchronous setting have been proposed in the literature \cite{iutzeler2013asynchronous, wei2013convergence, xu2017convergence, pu2020push, srivastava2011distributed, ram2009asynchronous}.
    However, most of these works assume that when a node activates, it simply selects the neighbor to contact randomly, based on a predefined probability distribution. 
    This approach overlooks the possibility of letting nodes \emph{choose} the neighbor to contact taking into account the optimization landscape at the time of activation.
    Therefore, here we depart from the probabilistic choice and ask:
    \emph{can nodes pick the neighbor smartly to make the optimization process converge faster?}

    In this paper we give an affirmative answer and propose an algorithm that achieves this by solving the dual problem of \eqref{eq:primal_constrained}. 
    In the dual formulation, there is one dual variable $\lambda_\ell \in \Rbb^d$ per constraint $\theta_i = \theta_j$, hence
    each dual variable can be associated with an edge $\ell$ in the graph. 
    Our algorithm lets~an activated node $i$ contact a neighbor $j$ so that together they update their shared variable $\lambda_\ell$ with a gradient step.
    In particular, we propose to select the neighbor $j$ such that the updated $\lambda_\ell$ is the one \emph{whose directional gradient for the dual function is the largest}, and thus the one that provides the greatest cost improvement at that iteration.
    Such optimal choice for asynchronous decentralized optimization has not yet been considered in the literature.
    
    Interestingly, the above protocol where a node activates and selects a $\lambda_\ell$ to update can be seen as applying the coordinate descent (CD) method \cite{nesterov2012efficiency} to solve the dual problem of \eqref{eq:primal_constrained}, with the following key difference: unlike standard CD methods, where \emph{any} of the $d$ coordinates may be updated, now \emph{only a small subset of coordinates are accessible at each step}, which are the coordinates associated with the edges connected to the node activated.
    Moreover, our proposal of updating the $\lambda_\ell$ with the largest gradient is similar to the Gauss-Southwell (GS) rule\cite{nutini2015coordinate}, but applied \emph{only} to the parameters accessible by the activated node.
   
    We name such protocols \emph{set-wise CD} algorithms, and we analyze both random uniform and GS coordinate selection within the accessible set.
    To the best of our knowledge, set-wise CD schemes have not yet been explored in the decentralized setting, and thus the speedup achievable by the GS rule compared to uniform sampling is not known.
    Furthermore, three difficulties complicate the analysis and constitute the base of our contributions:
    (i) for arbitrary graphs, the dual problem of \eqref{eq:primal_constrained} has an objective function that is \emph{not} strongly convex, even if the primal functions $f_i$ \emph{are} strongly convex,
    (ii) the fact that the GS rule is applied to a few coordinates prevents the use of standard norms to obtain the linear rate, as commonly done for CD methods \cite{nesterov2012efficiency, nutini2015coordinate, nutini2017let}, and 
    (iii) the coordinate sets are overlapping (i.e. non-disjoint), which makes the problem even harder.

    For this reason, we develop a methodology where we prove strong convexity in norms uniquely defined for each algorithm considered.
    In particular, for the set-wise GS rule this requires relating the norm that we originally define to an alternative norm that considers non-overlapping sets, for which the problem becomes easier and solvable analytically. 

    Finally, our results also apply to the parallel distributed setting where the parameter vector is stored at a single server and workers can update different subsets of its entries \cite{tsitsiklis1986distributed, peng2016arock, xiao2019dscovr}. We show an example in our simulations.
    
    Our contributions can be summarized as follows: 
    
    \begin{itemize} 
        \item We introduce the class of \textit{set-wise CD} algorithms and analyze two variants to pick the coordinate to update in the activated set: one that uses uniform sampling (SU-CD), and another that applies the GS rule (SGS-CD).
        
        \item We show that this class of algorithms can be used to solve \eqref{eq:primal_constrained} asynchronously, and we provide the linear convergence rates of the two variants considered when the primal functions $f_i$ are smooth and strongly convex.
        
        \item To obtain these rates for SU-CD and SGS-CD, we prove strong convexity in uniquely-defined norms that, respectively
        (i) take into account the graph structure to show strong convexity in the linear subspace where the coordinate updates are applied, and
        (ii) account for both the random uniform node activation and the application of the GS rule to just a subset of the coordinates.

        \item We show that the speedup of SGS-CD with respect to SU-CD can be up to $N_{\max}$ (the size of the largest coordinate set), which is analogous to the that of the GS rule with respect to random uniform coordinate sampling in centralized CD \cite{nutini2015coordinate}. 
    \end{itemize}

\section{Related work} \label{sec:related_work}

    A number of algorithms have been proposed to solve \eqref{eq:primal_unconstrained} asynchronously.
    In \cite{ram2009asynchronous}, the activated node chooses a neighbor uniformly at random and both nodes average their primal local values.
    In \cite{iutzeler2013asynchronous} the authors adapted the ADMM algorithm to the decentralized setting,
    but it was the ADMM of \cite{wei2013convergence} the first one shown to converge at the same rate as the centralized ADMM. 
    The algorithm of \cite{xu2017convergence} tracks the average gradients to converge to the exact optimum instead of just a neighborhood around it, as many algorithms back then. 
    The algorithm of \cite{pu2020push} can be used on top of directed graphs, which impose additional challenges. 
    A key novelty of our scheme, compared to this line of work, is that we consider the possibility of letting the nodes \emph{choose smartly} the neighbor to contact in order to make convergence faster.

    Work \cite{verma2021max} is, to the best of our knowledge, the only work similarly considering smart neighbor selection. 
    The authors propose Max-Gossip, a version of \cite{nedic2007rate} where the activated node averages its local (primal) parameter with that of the neighbor with whom the parameter difference in the largest. 
    They consider convex scalar functions $f_i$, and use Lyapunov analysis to prove convergence to an optimal value. 
    In contrast, here we obtain linear convergence \emph{rates} for smooth and strongly convex $f_i$ using duality theory.
    
    Moreover, our theorems extend the results in \cite{nutini2015coordinate}, where the GS rule was shown to be up to $d$ times faster than uniform sampling for $f{:}\ \Rbb^d {\ram} \Rbb$, to the case where this choice is constrained to a subset of the coordinates only, sets have different sizes, each coordinate belongs to exactly two sets, and sets activate uniformly at random. 
    This matches not only the decentralized case, but also parallel distributed settings such as
    \cite{tsitsiklis1986distributed,peng2016arock,xiao2019dscovr}.
    For the latter, \cite{you2016asynchronous} also analyzed the GS applied to coordinate subsets, but their sets are disjoint, accessible by any worker, and they do not quantify the speedup of the method with respect to uniform sampling.

\section{Dual formulation} \label{sec:model_and_problems}
    
    In this section, we define the notation, obtain the dual problem of \eqref{eq:primal_constrained}, and analyze the properties of the dual objective function.  
    We will assume throughout that the functions $f_i$ are $M_i$-smooth and $\mu_i$-strongly convex:
    
    \begin{ceqn}
    \begin{align*}
        f_i(y) &\leq f_i(x) + \Abraces{\nabla f(x), y-x} + (M_i / 2) \normtwo{y-x}^2  \\ 
        f_i(y) &\geq f_i(x) + \Abraces{\nabla f(x), y-x} + (\mu_i / 2) \normtwo{y-x}^2. 
    \end{align*}     
    \end{ceqn}

    We define the concatenated primal and dual variables 
    $\theta = [\theta_1^T,\ldots,\theta_n^T]^T \in \Rbb^{nd}$ and 
    $\lambda = [\lambda_1^T,\ldots,\lambda_E^T]^T \in~\Rbb^{Ed}$, respectively.  
    The graph's incidence matrix $A \in \Rbb^{n \times E} $ has exactly one 1 and one -1 per column $\ell$, in the rows corresponding to nodes $i,j: \ell \equiv (i,j)$, and zeros elsewhere (the choice of sign for each node is irrelevant).
    We call $u_i \in \Rbb^n$ the vector that has 1 in entry $i$ and 0 elsewhere; 
    we define $e_\ell \in \Rbb^E$ analogously.  
    We use $k \in [K]$ to indicate $k=1,\ldots,K$. 
    Vectors $\ones$ and $\zeros$ are respectively the all-one and all-zero vectors, and $I_d$ is the $d \times d$ identity matrix. 
    Finally, we define the block arrays $\Lambda = A \otimes I_d \in \Rbb^{nd \times Ed}$ and 
    $U_i = u_i \otimes I_d \in \Rbb^{nd \times d}$, where $\otimes$ is the Kronecker~product.
    
    We can rewrite now \eqref{eq:problem-b} as $\Lambda^T \theta = \zeros$, and the node variables as $\theta_i = U_i^T \theta$.
    The minimum value of \eqref{eq:primal_constrained} satisfies:
    \begin{ceqn}
    \begin{align*} 
    \underset{\theta: \Lambda^T \theta = \zeros}{\inf}  & \sum_{i=1}^n f_i(U_i^T \theta)
    \stackrel{(\text{a})}{=} \inf_{\theta} \sup_{\lambda} \Sbraces{ \sum_{i=1}^n f_i (U_i^T \theta) - \lambda^T \Lambda^T \theta } \\
    &\stackrel{(\text{b})}{=} \sup_{\lambda} \inf_{\theta} \Sbraces{ \sum_{i=1}^n f_i (U_i^T \theta) - \lambda^T \Lambda^T \theta } \\
    &= -\inf_{\lambda} \sup_{\theta} \sum_{i=1}^n \Sbraces{ (U_i^T \Lambda \lambda)^T U_i^T \theta -  f_i (U_i^T \theta) } \\
    &= -\inf_{\lambda} \sum_{i=1}^n f_i^* (U_i^T \Lambda \lambda) 
    \triangleq  -\inf_{\lambda} F(\lambda), \numberthis \label{eq:dual_problem}
    \end{align*}
    \end{ceqn}
    
    \noindent
    where (a) holds due to Lagrange duality and (b) holds by strong duality (see e.g. Sec. 5.4 in \cite{boyd2004convex}).
    Functions $f_i^*$ are the Fenchel conjugates of the $f_i$, and are defined as 
    
    \begin{ceqn}
    \[ f_i^*(y) = \sup_{x \in \Rbb^d} \paren{ y^T x - f_i(x) }. \]  
    \end{ceqn}
    
    Our set-wise CD algorithms converge to the optimal solution of \eqref{eq:primal_constrained} by solving \eqref{eq:dual_problem}. 
    In particular, they update a single dual variable 
    $\lambda_\ell,\ell \in [E]$ 
    at each iteration and converge to some minimum value $\lambda^*$ of $F(\lambda)$. 

    Since $\sum_{i=1}^n f_i(U_i^T \theta)$ in \eqref{eq:problem-a} is $M_{\max}$-smooth and $\mu_{\min}$-strongly convex in $\theta$, with $M_{\max} = \max_i M_i$ and $\mu_{\min} = \min_i \mu_i$, function $F$ is $L$-smooth with $L = \frac{\gamma_{\max}}{\mu_{\min}}$, where $\gamma_{\max}$ is the largest eigenvalue of $\LL$ (Sec. 4 in \cite{uribe2020dual}).
    We call $\gamma^+_{\min}$ the smallest non-zero eigenvalue\footnote{The ``+" stresses that $\gamma^+_{\min}$ is the smallest \emph{strictly positive} eigenvalue.} of $\LL$. 
    
    However, as shown next, function $F$ is \emph{not} strongly convex in the standard L2 norm, which is the property that usually facilitates obtaining linear rates in optimization  literature. 
    
    \begin{lemma} \label{lemma:no_SC}
        $F$ is not strongly convex in $\normtwo{\cdot}$. 
    \end{lemma}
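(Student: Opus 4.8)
The plan is to exhibit an entire line in $\Rbb^{Ed}$ along which $F$ is constant; since a strongly convex function is strongly convex (hence coercive, hence non‑constant) when restricted to any line, this is already incompatible with strong convexity in $\normtwo{\cdot}$. The key structural observation is that $F$ depends on $\lambda$ only through $\Lambda\lambda$: writing $G(y):=\sum_{i=1}^n f_i^*(U_i^T y)$ for $y\in\Rbb^{nd}$, the dual objective factors as $F(\lambda)=G(\Lambda\lambda)$, so $F(\lambda+v)=F(\lambda)$ for every $\lambda$ whenever $\Lambda v=\zeros$. It therefore suffices to produce a single nonzero $v\in\ker\Lambda$.

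To build such a $v$, I would use the rank deficiency of the incidence matrix. Every column of $A$ has exactly one $+1$ and one $-1$, so $\ones_n^T A=\zeros^T$ and thus $\rank A\le n-1<E$ whenever the graph carries a cycle; concretely, the signed indicator vector $c\in\Rbb^E$ of such a cycle satisfies $Ac=\zeros$. Then $v:=c\otimes z$ is a nonzero element of $\ker\Lambda=\ker(A\otimes I_d)$ for any $z\neq\zeros$, because $\Lambda v=(Ac)\otimes z=\zeros$. Consequently $t\mapsto F(\lambda_0+tv)$ is constant for every fixed $\lambda_0$.

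Finally I would derive the contradiction explicitly. If $F$ were $\sigma$‑strongly convex in $\normtwo{\cdot}$ for some $\sigma>0$, the defining inequality applied along $\lambda_0+tv$ would give $F(\lambda_0)=F(\lambda_0+tv)\ge F(\lambda_0)+t\Abraces{\nabla F(\lambda_0),v}+\tfrac{\sigma}{2}t^2\normtwo{v}^2$, i.e. $0\ge t\Abraces{\nabla F(\lambda_0),v}+\tfrac{\sigma}{2}t^2\normtwo{v}^2$ for all $t\in\Rbb$, which fails for $|t|$ large since $\normtwo{v}>0$. Hence no such $\sigma$ exists.

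The step I expect to be the real point to get right is the second one: the existence of a nonzero $v\in\ker\Lambda$ is exactly the statement that the edgewise consensus constraints $\theta_i=\theta_j$ are redundant, which is the case precisely when the communication graph contains a cycle; on a spanning tree $F$ is in fact genuinely strongly convex (with modulus governed by $\gamma^+_{\min}$ and $M_{\max}$), so the lemma should be read as a statement about graphs with redundant constraints — any graph that is not a tree — which is the regime of interest here. Everything else is routine: the standing $M_i$‑smoothness and $\mu_i$‑strong convexity of each $f_i$ guarantee, through Fenchel duality, that each $f_i^*$ is finite and differentiable on all of $\Rbb^d$, so that $F$ and $\nabla F$ are well defined and the line restriction used above is legitimate.
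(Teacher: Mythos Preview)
Your argument is correct and follows the same approach as the paper: exhibit a nonzero $v\in\ker\Lambda$ (which exists precisely when the graph is not a tree) and observe that $F$ is constant along $\lambda_0+tv$, contradicting strong convexity. The paper states this in one sentence, while you spell out the cycle construction and the quadratic lower-bound contradiction explicitly; you also correctly flag that the lemma is really a statement about non-tree graphs, matching the paper's parenthetical caveat.
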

    \begin{proof}
        Since $\Lambda$ does not have full column rank in the general case (i.e., unless the graph is a tree), there exist $w~\in~\Rbb^{Ed}$ such that $w \neq \zeros$ and $F(\lambda) = F(\lambda + tw) \; \forall t \in \Rbb$.
    \end{proof} 
    
    Nevertheless, we can still show linear rates for the set-wise CD algorithms using the following result.
    
    \begin{lemma}[\textbf{\textit{Appendix C of} \cite{hendrikx2019accelerated}}] \label{lemma:SC_in_AA}
        $F$ is $\sigma_A$-strongly convex in the semi-norm $\normAA{x} \triangleq (x^T \Lambda^+ \Lambda x)^\frac{1}{2}$, with $\sigma_A = \frac{\gamma^+_{\min}}{M_{\max}}$. 
    \end{lemma}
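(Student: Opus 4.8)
The plan is to reduce the statement to a single spectral inequality by exploiting the composite structure $F(\lambda)=g^*(\Lambda\lambda)$, where $g(\theta)\triangleq\sum_{i=1}^n f_i(U_i^T\theta)=\sum_{i=1}^n f_i(\theta_i)$ is the separable primal objective of \eqref{eq:problem-a}. Since conjugation distributes over a separable sum, $g^*(y)=\sum_{i=1}^n f_i^*(U_i^Ty)$, and therefore $F(\lambda)=g^*(\Lambda\lambda)$ is exactly the dual objective of \eqref{eq:dual_problem}. As already observed above, $g$ is $M_{\max}$-smooth and $\mu_{\min}$-strongly convex; by the standard conjugate-duality correspondence, $M_{\max}$-smoothness of the closed proper convex function $g$ is equivalent to $\tfrac{1}{M_{\max}}$-strong convexity of $g^*$ in $\normtwo{\cdot}$, while $\mu_{\min}$-strong convexity of $g$ makes $g^*$ differentiable with Lipschitz gradient, so that $\nabla F(\lambda)=\Lambda^T\nabla g^*(\Lambda\lambda)$ by the chain rule.

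First I would instantiate the strong-convexity inequality of $g^*$ at the two points $\Lambda\lambda$ and $\Lambda\lambda'$ and plug in the chain-rule identity for $\nabla F$, which yields, for all $\lambda,\lambda'\in\Rbb^{Ed}$,
\begin{ceqn}
\[
F(\lambda')\ \geq\ F(\lambda)+\Abraces{\nabla F(\lambda),\,\lambda'-\lambda}+\frac{1}{2M_{\max}}\,\normtwo{\Lambda(\lambda'-\lambda)}^2 .
\]
\end{ceqn}
Comparing this with the definition $\normAA{x}^2=x^T\LL x$, the lemma reduces to the purely linear-algebraic bound $\normtwo{\Lambda v}^2\geq\gamma^+_{\min}\,\normAA{v}^2$ for every $v\in\Rbb^{Ed}$: granting it, the quadratic term above is at least $\tfrac{\gamma^+_{\min}}{2M_{\max}}\normAA{\lambda'-\lambda}^2$, which is exactly $\sigma_A$-strong convexity in $\normAA{\cdot}$ with $\sigma_A=\gamma^+_{\min}/M_{\max}$.

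For this last step I would split $v=v_0+v_1$ orthogonally with $v_0\in\ker\Lambda$ and $v_1\in(\ker\Lambda)^\perp=\range(\Lambda^T)$. The matrix $\LL=\Lambda^+\Lambda$ is the orthogonal projector onto $\range(\Lambda^T)$, so $\LL v=v_1$ and hence $\normAA{v}^2=\normtwo{v_1}^2$; moreover $\Lambda v_0=\zeros$, so $\normtwo{\Lambda v}^2=\normtwo{\Lambda v_1}^2$, and finally $\normtwo{\Lambda v_1}^2=v_1^T\Lambda^T\Lambda v_1\geq\gamma^+_{\min}\normtwo{v_1}^2$ because $v_1$ lies in the span of the eigenvectors of $\Lambda^T\Lambda$ associated with its nonzero eigenvalues, the smallest of which is $\gamma^+_{\min}$. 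Chaining these gives the bound, hence the lemma. (When $g^*$ happens to be $C^2$, an equivalent shortcut is $\nabla^2 F(\lambda)=\Lambda^T\nabla^2 g^*(\Lambda\lambda)\Lambda\succeq\tfrac{1}{M_{\max}}\Lambda^T\Lambda\succeq\tfrac{\gamma^+_{\min}}{M_{\max}}\LL$, using $\nabla^2 g^*\succeq\tfrac{1}{M_{\max}}I$ together with the same projector identity.)

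I expect the only genuine subtlety to be this spectral step: one must work on $(\ker\Lambda)^\perp$, where $\Lambda^T\Lambda$ is positive definite with smallest eigenvalue $\gamma^+_{\min}$ while $\LL$ acts as the identity, and note that along $\ker\Lambda$ both sides vanish -- which is precisely why only the semi-norm $\normAA{\cdot}$ can appear here and not a genuine norm, in agreement with Lemma~\ref{lemma:no_SC}. Everything else is routine: the separable-conjugate identity, the smoothness-to-strong-convexity duality dictionary, and the chain rule.
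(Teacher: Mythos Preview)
Your argument is correct. The paper itself does not supply a proof of this lemma; it simply attributes the result to Appendix~C of \cite{hendrikx2019accelerated} and moves on. So there is no in-paper proof to compare against, and what you have written is a clean, self-contained derivation that the paper lacks.

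On the mathematics: writing $F(\lambda)=g^*(\Lambda\lambda)$ with $g$ separable, invoking the smoothness--strong-convexity duality to get $\tfrac{1}{M_{\max}}$-strong convexity of $g^*$, and then reducing to the spectral inequality $\normtwo{\Lambda v}^2\geq\gamma^+_{\min}\,v^T\Lambda^+\Lambda v$ is exactly the right structure. The orthogonal split $v=v_0+v_1$ along $\ker\Lambda\oplus\range(\Lambda^T)$, together with the fact that $\Lambda^+\Lambda$ is the orthogonal projector onto $\range(\Lambda^T)$ (which the paper also notes just after the lemma), makes both $\normAA{v}^2=\normtwo{v_1}^2$ and $\normtwo{\Lambda v}^2=v_1^T\Lambda^T\Lambda v_1\geq\gamma^+_{\min}\normtwo{v_1}^2$ immediate. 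There is no gap.
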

            
    Above, $\Lambda^+$ denotes the pseudo-inverse of $\Lambda$. 
    A key fact for the proofs in the next section is that matrix $\Lambda^+ \Lambda$ is a projector onto $\range(\Lambda^T)$, the column space of $\Lambda^T$.
    
    To simplify the notation, in what follows we assume that $d=1$, so that $\Lambda = A$, $U_i = u_i$, and the gradient
    $\nabla_\ell F(\lambda) = \frac{\partial F(\lambda)}{\partial \lambda_\ell}$ 
    of $F(\lambda)$ in the direction of $\lambda_\ell$ is a scalar. 
    In Sec. \ref{sec:d_larger_1} we discuss how to adapt our proofs to the case $d > 1$.

\section{Set-wise Coordinate Descent Algorithms} \label{sec:algos_and_conv}
    
    In this section we present the \emph{set-wise CD} algorithms, which can solve generic convex problems (and \eqref{eq:dual_problem} in particular)
    optimally and asynchronously.
    We analyze two possibilities for the coordinate choice within the accessible coordinate subset: (i) sampling uniformly at random (SU-CD), and (ii) applying the GS rule (SGS-CD).
    
    If coordinate $\ell$ is updated at iteration $k$ and assuming \hbox{$d=1$}, the standard CD update applied to $F(\lambda)$~is \cite{nesterov2012efficiency}:
    \begin{ceqn}
    \begin{equation} \label{eq:CD_update}
        \lambda^{k+1} = \lambda^k - \eta^k \nabla_\ell F(\lambda^k) e_\ell,
    \end{equation}
    \end{ceqn}
    where $\eta^k$ is the stepsize. Since $F(\lambda)$ is $L$-smooth, choosing $\eta^{k} = 1/L \; \forall k$ guarantees descent at each iteration \cite{nutini2015coordinate}:
    \begin{ceqn}
    \begin{equation} \label{eq:iter_progress}
        F(\lambda^{k+1}) \leq F(\lambda^k) - \frac{1}{2L} \paren{\nabla_\ell F(\lambda^k)}^2.
    \end{equation}
    \end{ceqn}
    
    Eq. \eqref{eq:iter_progress} will be the departure point to prove the linear convergence rates of SU-CD and SGS-CD. 
    
    We now define formally the set-wise CD algorithms. 
    
    \begin{definition}[\textit{\textbf{Set-wise CD algorithm}}] \label{def:set-wise_CD}
        In a set-wise CD algorithm, every coordinate 
        $\ell \in [E]$ is assigned to (potentially multiple) sets 
        $\Scal_i,i \in [n]$, such that all coordinates belong to at least one set.
        At any time, a set $\Scal_i$ may activate with uniform probability among the $i$; 
        the set-wise CD algorithm then chooses a single coordinate $\ell \in \Scal_i$ to update using \eqref{eq:CD_update}.
    \end{definition}
    
    The next remark shows how the decentralized problem \eqref{eq:primal_constrained} can be solved asynchronously with set-wise CD~algorithms.
    
    \begin{remark} \label{rem:set-wise_for_asych_optim}
        By letting (i) the $E$ coordinates\footnote{If $d{>}1$, the standard CD terminology calls each $\lambda_\ell$ a ``block coordinate", i.e. a vector of $d$ coordinates out of the $E \cdot d$ coordinates of function $F(\lambda)$.} 
        in Definition \ref{def:set-wise_CD} be the dual variables $\lambda_\ell$, and (ii) the $\Scal_i, i \in [n]$ be the sets of dual variables corresponding to the edges that are connected to each node $i$, nodes can run a set-wise CD algorithm to solve \eqref{eq:dual_problem} (and thus, also \eqref{eq:primal_constrained}) asynchronously.
    \end{remark}
    
    In light of Remark \ref{rem:set-wise_for_asych_optim}, in the following we illustrate the steps that should be performed by the nodes to run the set-wise CD algorithms to find a $\lambda^*$. 
    We first note that the gradient of $F(\lambda)$ in the direction\footnote{This is equivalent to saying ``the $\ell$-th (block) entry of the gradient $\nabla F(\lambda)$".} of $\lambda_\ell$ for $\ell \equiv (i,j)$ is 
    \begin{ceqn}
    \begin{equation} \label{eq:coord_gradient}
        \nabla_\ell F(\lambda) = A_{i\ell} \nabla f_i^*(u_i^T A \lambda) 
        + A_{j\ell} \nabla f_j^*(u_j^T A \lambda).
    \end{equation}
    \end{ceqn}

    Nodes can use \eqref{eq:CD_update} and \eqref{eq:coord_gradient} to update the $\lambda_\ell$ that they have access to (i.e., those corresponding to the edges they are connected to) as follows: 
    each node $i$ keeps in memory the current values of $\lambda_\ell, \ell \in \Scal_i$, which are needed to compute $\nabla f_i^*(u_i^T A \lambda)$.
    Then, when edge $\ell \equiv (i,j)$ is updated (either because node $i$ activated and contacted $j$, or vice versa), both $i$ and $j$ compute their respective terms in the right hand side of \eqref{eq:coord_gradient} and exchange them through their link. 
    Finally, both nodes compute \eqref{eq:coord_gradient} and update their copy of $\lambda_\ell$ applying \eqref{eq:CD_update}. 
    
    Algorithms \ref{alg:SU-CD} and \ref{alg:SGS-CD} below detail these steps for SU-CD and SGS-CD, respectively.
    We have used $\Ncal_i$ to indicate the set of neighbors of node $i$ (note that $\Scal_i = \{ \ell: \ell \equiv (i,j), j \in \Ncal_i \}$).
    Table \ref{tab:set_deifnitions} shows this and other set-related notation that will be frequently used in the sections that follow.

    We now proceed to describe the SU-CD and SGS-CD algorithms in detail, and prove their linear convergence rates.

    %
    %
    \begin{table}[t]
    \centering
        \caption{Set-related definitions} \vspace{-0.8em}
        \label{tab:set_deifnitions}
    \begin{tabu} {|ll|}  \hline
        $\Scal_i$       & Set of edges connected to node $i$ \\  
        $\Ncal_i$       & Set of neighbors of node $i$   \\ 
        $N_i$           & Degree of node $i$, i.e. $N_i = \abs{\Scal_i} = \abs{\Ncal_i}$ \\ 
        $N_{\max}$      & Maximum degree in the network, i.e. $\max_i N_i$ \\ 
        $T_i$           & Selector matrix of set $\Scal_i$ (see Definition 
        \ref{def:selector_matrices}) \\
        $\Scal_i'$      & Subset $\Scal_i' \subseteq \Scal_i$ such that 
                        $\Scal_i' \cap \Scal_j' = \emptyset$ if $i \neq j$ \\ 
        $T_i'$          & Selector matrix of set $\Scal_i$ \\ 
        \rule{0pt}{10pt}$\overline{\Scal_i'}$   
                        & Complement set of $\Scal_i'$ such that 
                        $\overline{\Scal_i'} = \Scal_i \setminus \Scal_i'$ \\ 
        \rule{0pt}{10pt}$\overline{T_i'}$       
                        & Selector matrix of set $\overline{\Scal_i'}$ \vspace{0.2em} \\ 
        \hline
    \end{tabu}
    \end{table}

    %
    %

    \subsection{Set-wise Uniform CD (SU-CD)}
    
        In SU-CD, the activated node chooses the neighbor uniformly at random, as shown in Alg. \ref{alg:SU-CD}. 
        We can compute the per-iteration progress of SU-CD taking expectation in \eqref{eq:iter_progress}:
        \begin{ceqn}
        \begin{align*} 
            \Ebb \Big[ F(\lambda^{k+1}) \mid \lambda^k & \Big]
            \leq F(\lambda^k) - \frac{1}{2L} \Exp{\paren{\nabla_\ell F(\lambda^k)}^2 \mid \lambda^k } \\
            &= F(\lambda^k) - \frac{1}{2Ln} \sum_{i=1}^n \frac{1}{N_i} \sum_{\ell \in \Scal_i}  
            \paren{\nabla_\ell F(\lambda^k)}^2 \\
            &\leq F(\lambda^k) - \frac{1}{L n N_{\max}} \normtwo{\nabla F(\lambda^k)}^2 
            \numberthis \label{eq:progress_SU-CD}
        \end{align*}
        \end{ceqn}
        where $N_i = \abs{\Scal_i}$ and $N_{\max} = \max_i N_i$.
        
        The standard procedure to show the linear convergence of CD in the centralized case is to lower-bound $\normtwo{\nabla F(\lambda)}^2$ using the strong convexity of the function \cite{nesterov2012efficiency, nutini2015coordinate}.
        However, since $F$ is \emph{not} strongly convex (Lemma \ref{lemma:no_SC}), we cannot apply this procedure to get the linear rate of SU-CD.
        
        We can, however, use $F$'s strong convexity in $\normAA{\cdot}$ instead (Lemma \ref{lemma:SC_in_AA}).
        The next result gives the core of the proof.

        \begin{lemma} \label{lemma:equality_of_norms}
            It holds that 
            \begin{ceqn}
            \begin{equation*}
                \normtwo{\nabla F(\lambda)} = \normAA{\nabla F(\lambda)} = \normAA{\nabla F(\lambda)}^*,
            \end{equation*}
            \end{ceqn}
            where $\normAA{\cdot}^*$ is the dual norm of $\normAA{\cdot}$, defined as (e.g. \cite{boyd2004convex})
            \begin{ceqn}
            \begin{equation} \label{eq:dual_norm_AA}
                \normAA{z}^* = \sup_{x \in \Rbb^d} \Cbraces{ z^T x \biggr\rvert \normAA{x} \leq 1 }.
            \end{equation}
            \end{ceqn}
        \end{lemma}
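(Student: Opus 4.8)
The plan is to exploit two facts: first, that the dual gradient $\nabla F(\lambda)$ always lies in the subspace $\range(\Lambda^T)$ --- which for $d=1$ is $\range(A^T)$ --- and second, that $\AA$ is the orthogonal projector onto that subspace (as noted right after Lemma~\ref{lemma:SC_in_AA}). Since an orthogonal projector acts as the identity on its range, the semi-norm $\normAA{\cdot}$ will coincide with the Euclidean norm when evaluated at $\nabla F(\lambda)$, and the same projection argument will also pin down the dual norm.

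\textbf{Step 1 (the gradient lies in $\range(A^T)$).} I would first differentiate $F(\lambda)=\sum_{i=1}^n f_i^*(u_i^T A \lambda)$ by the chain rule to obtain
\begin{ceqn}
\begin{equation*}
  \nabla F(\lambda) = \sum_{i=1}^n A^T u_i \, \nabla f_i^*(u_i^T A \lambda) = A^T v(\lambda), \qquad v(\lambda) \triangleq \sum_{i=1}^n u_i \, \nabla f_i^*(u_i^T A \lambda) \in \Rbb^n,
\end{equation*}
\end{ceqn}
which is just the vector form of \eqref{eq:coord_gradient}. Hence $\nabla F(\lambda) \in \range(A^T)$ for every $\lambda$.

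\textbf{Step 2 (first equality).} Because $\AA$ is the orthogonal projector onto $\range(A^T)$, Step~1 gives $\AA\,\nabla F(\lambda) = \nabla F(\lambda)$, so $\normAA{\nabla F(\lambda)}^2 = \nabla F(\lambda)^T \AA\, \nabla F(\lambda) = \normtwo{\nabla F(\lambda)}^2$. \textbf{Step 3 (dual norm).} Writing $\normAA{x} = \normtwo{\AA\,x}$ (valid since $\AA$ is symmetric and idempotent) and splitting $x = \AA\,x + (I-\AA)x$, I would use $\AA(I-\AA)=0$ to see that for any $z \in \range(A^T)$ one has $z^T x = z^T \AA\, x$; therefore, as $x$ ranges over $\{x:\normtwo{\AA\,x}\le 1\}$ the point $y \triangleq \AA\,x$ sweeps the whole Euclidean unit ball of $\range(A^T)$, and
\begin{ceqn}
\begin{equation*}
  \normAA{z}^* = \sup \Cbraces{ z^T x \,\big|\, \normtwo{\AA\,x} \le 1 } = \sup \Cbraces{ z^T y \,\big|\, y \in \range(A^T),\ \normtwo{y} \le 1 } = \normtwo{z},
\end{equation*}
\end{ceqn}
the last equality holding because $z$ itself lies in $\range(A^T)$, so the supremum is attained at $y = z/\normtwo{z}$. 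Taking $z = \nabla F(\lambda)$, which is in $\range(A^T)$ by Step~1, yields $\normAA{\nabla F(\lambda)}^* = \normtwo{\nabla F(\lambda)}$ and closes the chain of equalities.

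\textbf{Main obstacle.} The only genuinely substantive ingredient is Step~1 --- recognizing that every dual gradient is confined to $\range(A^T)$ (equivalently to $\range(\Lambda^T)$ when $d>1$); once that is in hand, Steps~2 and~3 are routine linear algebra about the projector $\AA$. A minor point to watch is that $\normAA{\cdot}$ is a genuine semi-norm rather than a norm, so its dual is $+\infty$ off $\range(A^T)$ and the computation in Step~3 must be phrased on that subspace; this causes no trouble here because the lemma only evaluates these quantities at $\nabla F(\lambda)$.
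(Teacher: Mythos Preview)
Your proof is correct and follows essentially the same route as the paper: establish $\nabla F(\lambda)\in\range(A^T)$, use that $\AA$ is the identity there to get $\normAA{\nabla F(\lambda)}=\normtwo{\nabla F(\lambda)}$, and then pass to the dual. The only minor differences are cosmetic --- you obtain Step~1 directly via the chain rule whereas the paper argues by directional invariance of $F$ along $\ker(A)$, and your Step~3 computes the dual semi-norm explicitly on $\range(A^T)$ where the paper simply invokes self-duality of the $\ell_2$ norm; your treatment is in fact slightly more careful about the semi-norm subtlety.
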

        
        \begin{proof}
            Note that 
            $\forall w \neq \zeros$ such that $F(\lambda + tw) = F(\lambda) \; \forall t$,
            it holds that $w^T \nabla F(\lambda) = 0$ 
            and thus $\nabla F(\lambda) \in \range(A^T)$.
            This means that $\AA \nabla F(\lambda) = I_E \nabla F(\lambda)$, and therefore it holds that
            $\normAA{\nabla F(\lambda)} = \normtwo{\nabla F(\lambda)}$.
            Finally, since the dual norm of the L2 norm is again the L2 norm, we have that also $\normAA{\nabla F(\lambda)}^* = \normtwo{\nabla F(\lambda)}$, which gives the result.
        \end{proof}
        
        We now use Lemma \ref{lemma:equality_of_norms} to prove the linear rate of SU-CD.

    %
    \setlength{\textfloatsep}{10pt} 
    \begin{algorithm}[t]
    \caption{Set-wise Uniform CD (SU-CD)}
    \label{alg:SU-CD}
    \begin{algorithmic}[1]
        \MYSTATE {\bfseries Input:} Functions $f_i$, step $\eta$, incidence matrix $A$, graph~$\Gcal$
        \MYSTATE Initialize $\theta_i^0, i=1,\ldots,n$ and $\lambda_\ell^0, \ell=1,\ldots,E$
        
        \MYSTATE \textbf{for} $k = 1,2,\ldots$ \textbf{do}
            \MYSTATE \algindent Sample activated node $i \in \{1,\ldots,n\}$ uniformly
            \MYSTATE \algindent Node $i$ picks neighbor $j \gets \Ucal \{h: h \in \Ncal_i \}$
            \MYSTATE \algindent Node $i$ computes $\nabla f_i^*(u_i^T A \lambda)$ and sends it to $j$
            \MYSTATE \algindent Node $j$ computes $\nabla f_j^*(u_j^T A \lambda)$ and sends it to $i$

        	\MYSTATE \algindent \parbox[t]{\dimexpr\linewidth-\algorithmicindent}{Nodes $i,j{:}\;(i,j)\equiv\ell$ use \eqref{eq:coord_gradient} to update their local copies of $\lambda_\ell$ by  
        	$\lambda_\ell^k \gets \lambda_\ell^{k-1} - \eta \nabla_\ell F(\lambda)$} \\
        	\MYSTATE \algindent $\lambda_m^k \gets \lambda_m^{k-1} \; \forall \text{ edges } m \neq \ell$
    \end{algorithmic}
    \end{algorithm}

        \begin{theorem}[\textit{\textbf{Rate of SU-CD}}] \label{theo:rate_SU-CD}
            SU-CD converges as
            \begin{ceqn}
            \[ \Exp{F(\lambda^{k+1}) \mid \lambda^k} - F(\lambda^*) \leq
            \paren{1 - \frac{2\sigma_A}{LnN_{\max}}} \Sbraces{F(\lambda^k) - F(\lambda^*)}. \]
            \end{ceqn}
        \end{theorem}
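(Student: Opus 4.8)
The plan is to turn the per-iteration progress bound \eqref{eq:progress_SU-CD} into a contraction by lower-bounding $\normtwo{\nabla F(\lambda^k)}^2$ in terms of the suboptimality gap $F(\lambda^k)-F(\lambda^*)$. The tool for this is a Polyak--\L ojasiewicz-type inequality obtained from the strong convexity of $F$ in $\normAA{\cdot}$ (Lemma \ref{lemma:SC_in_AA}), combined with the norm identity of Lemma \ref{lemma:equality_of_norms}. Concretely, I would first prove
\[ F(\lambda) - F(\lambda^*) \;\leq\; \frac{1}{2\sigma_A}\paren{\normAA{\nabla F(\lambda)}^*}^2 \;=\; \frac{1}{2\sigma_A}\normtwo{\nabla F(\lambda)}^2 \qquad \forall\,\lambda, \]
where the equality is exactly Lemma \ref{lemma:equality_of_norms}.

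To establish the inequality, start from $\sigma_A$-strong convexity of $F$ in the semi-norm: for all $\lambda,y$,
\[ F(y) \;\geq\; F(\lambda) + \Abraces{\nabla F(\lambda),\, y-\lambda} + \frac{\sigma_A}{2}\normAA{y-\lambda}^2, \]
and minimize the right-hand side over $y\in\Rbb^{E}$. Setting $z = y-\lambda$ and recalling $\normAA{z}^2 = z^T \AA z$ with $\AA$ the projector onto $\range(A^T)$, together with the fact established in the proof of Lemma \ref{lemma:equality_of_norms} that $\nabla F(\lambda)\in\range(A^T)$, the component of $z$ in $\ker(A^T)$ drops out of the objective; a standard Fenchel-conjugate computation then gives $\min_y(\cdot) = F(\lambda) - \tfrac{1}{2\sigma_A}\paren{\normAA{\nabla F(\lambda)}^*}^2$. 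Since this minimum is $\leq F(y)$ for every $y$, in particular $\leq F(\lambda^*)$, the claimed bound follows, i.e. $\normtwo{\nabla F(\lambda^k)}^2 \geq 2\sigma_A\Sbraces{F(\lambda^k)-F(\lambda^*)}$.

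Plugging this into \eqref{eq:progress_SU-CD} yields
\[ \Exp{F(\lambda^{k+1})\mid\lambda^k} \;\leq\; F(\lambda^k) - \frac{2\sigma_A}{LnN_{\max}}\Sbraces{F(\lambda^k)-F(\lambda^*)}, \]
and subtracting $F(\lambda^*)$ from both sides gives exactly the stated rate. The one step requiring care --- and which I expect to be the main (though mild) obstacle --- is the minimization over $y$: because $\normAA{\cdot}$ is only a semi-norm and $\lambda^*$ is not unique, one must verify that the infimum of the strong-convexity lower bound is still $-\tfrac{1}{2\sigma_A}\paren{\normAA{\nabla F(\lambda)}^*}^2$. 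This goes through precisely because $\nabla F(\lambda)\in\range(A^T)$, so the quadratic is effectively minimized over that subspace, where $\normAA{\cdot}$ is a genuine norm and $F$ is genuinely strongly convex; everything else is the routine CD recursion.
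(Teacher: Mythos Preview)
Your proposal is correct and follows essentially the same route as the paper: start from $\sigma_A$-strong convexity in $\normAA{\cdot}$, minimize the lower bound over $y$ to obtain $F(\lambda^*)\geq F(\lambda)-\tfrac{1}{2\sigma_A}\paren{\normAA{\nabla F(\lambda)}^*}^2$, invoke Lemma~\ref{lemma:equality_of_norms} to identify $\paren{\normAA{\nabla F(\lambda)}^*}^2$ with $\normtwo{\nabla F(\lambda)}^2$, and substitute into \eqref{eq:progress_SU-CD}. Your extra care in justifying the minimization step (restricting to $\range(A^T)$ where the semi-norm is a genuine norm, using $\nabla F(\lambda)\in\range(A^T)$) is a detail the paper leaves implicit by citing \cite{nutini2015coordinate}.
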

        
        \begin{proof}
            Since $F(\lambda)$ is strongly convex in $\normAA{\cdot}$ with strong convexity constant $\sigma_A$ (Lemma \ref{lemma:SC_in_AA}), it holds
            \begin{ceqn}
            \[ F(y) \geq F(x) + \Abraces{\nabla  F(x), y - x} + \frac{\sigma_A}{2} \normAA{y-x}^2. \]
            \end{ceqn}
            
            Minimizing both sides with respect to $y$ as in \cite{nutini2015coordinate} we get 
            
            \begin{ceqn}
            \begin{equation} \label{eq:guarantee_suboptim_SU}
                F(x^*) \geq F(x) - \frac{1}{2\sigma_A} \paren{ \normAA{\nabla  F(x)}^* }^2,
            \end{equation}
            \end{ceqn}
            
            \noindent
            and rearranging terms we can lower-bound $\paren{\normAA{\nabla F(x)}^*}^2$.
            
            Finally, we can use Lemma \ref{lemma:equality_of_norms} to replace 
            $\paren{\normtwo{\nabla  F(x)}}^2$ with $\paren{\normAA{\nabla F(x)}^*}^2$
            in \eqref{eq:progress_SU-CD}, and use the lower bound on $\paren{\normAA{\nabla F(x)}^*}^2$ given by \eqref{eq:guarantee_suboptim_SU} to get the result.
        \end{proof}
        
        Note that vector $\lambda$ has $\frac{1}{2} \sum_i N_i = E \leq \frac{nN_{\max}}{2}$ coordinates, where the inequality holds with equality for regular graphs. 
        We make the following remark.
        
        \begin{remark}
            If $\Gcal$ is regular, the linear convergence rate of SU-CD is $\frac{\sigma_A}{LE}$, which matches the rate of centralized uniform CD for strongly convex functions \cite{nesterov2012efficiency, nutini2015coordinate}, with the only difference that now the strong convexity constant $\sigma_A$ is defined over norm $\normAA{\cdot}$.
        \end{remark}
        
        In the next section we analyze SGS-CD and show that its convergence rate can be up to $N_{\max}$ times that of SU-CD.

    %
    %

    \subsection{Set-wise Gauss-Southwell CD (SGS-CD)}

    %
    \setlength{\textfloatsep}{10pt} 
    \begin{algorithm}[t]
    \caption{Set-wise Gauss-Southwell CD (SGS-CD)}
    \label{alg:SGS-CD}
    \begin{algorithmic}[1]
        \MYSTATE {\bfseries Input:} Functions $f_i$, step $\eta$, incidence matrix $A$, graph~$\Gcal$
        \MYSTATE Initialize $\theta_i^0, i=1,\ldots,n$ and $\lambda_\ell^0, \ell=1,\ldots,E$
        
        \MYSTATE \textbf{for} $k = 1,2,\ldots$ \textbf{do}
            \MYSTATE \algindent Sample activated node $i \in \{1,\ldots,n\}$ uniformly

            \MYSTATE \algindent All $h \in \Ncal_i$ compute $\nabla f_h^*(u_h^T A \lambda)$ and send it to $i$ \label{line:all_neighbors_send_grads}

            \MYSTATE \algindent Node $i$ computes $\nabla f_i^*(u_i^T A \lambda)$
            \MYSTATE \algindent \parbox[t]{\dimexpr\linewidth-\algorithmicindent}{Compute $\nabla_\ell F(\lambda) \; \forall \ell \in \Scal_i$ (equivalently, $\forall h \in \Ncal_i)$ with 
            \eqref{eq:coord_gradient} using the received $\nabla f_h^*(u_h^T A \lambda)$} \\
            
            \MYSTATE \algindent Node $i$ selects $j \gets \max_{h \in \Ncal_i} \abs{\nabla_\ell F(\lambda)}, 
            \ell {\equiv} (i,h)$

        	\MYSTATE \algindent Node $i$ sends $\nabla f_i^*(u_i^T A \lambda)$ to $j$
        	
        	\MYSTATE \algindent \parbox[t]{\dimexpr\linewidth-\algorithmicindent}{Nodes $i,j{:}\;(i,j)\equiv\ell$ use \eqref{eq:coord_gradient} to update their local copies of $\lambda_\ell$ by  
        	$\lambda_\ell^k \gets \lambda_\ell^{k-1} - \eta \nabla_\ell F(\lambda)$} \\
        	\MYSTATE \algindent $\lambda_m^k \gets \lambda_m^{k-1} \; \forall \text{ edges } m \neq \ell$
    \end{algorithmic}
    \end{algorithm}

    In SGS-CD, as shown in Alg. \ref{alg:SGS-CD}, the activated node $i$ selects the neighbor $j$ to contact applying the GS rule within the edges in~$\Scal_i$:
    \begin{ceqn}
    \begin{equation*}
        \ell = \argmax_{m \in \Scal_i} \paren{\nabla_m F(\lambda)}^2, 
    \end{equation*} 
    \end{ceqn}
    \noindent
    and then $j$ satisfies $\ell \equiv (i,j)$.
    In order to make this choice, all nodes $h \in \Ncal_i$ must send their $\nabla f_h^*(u_h^T A \lambda)$ to node $i$ (line \ref{line:all_neighbors_send_grads} in Alg. \ref{alg:SGS-CD}). 
    We discuss this additional communication step of SGS-CD with respect to SU-CD in Sec. \ref{sec:conclusion}.
    
    To obtain the convergence rate of SGS-CD we will follow the steps taken for SU-CD in the proof of Theorem \ref{theo:rate_SU-CD}.
    As done for SU-CD, we start by computing the per-iteration progress taking expectation in \eqref{eq:iter_progress} for SGS-CD:
    
    \begin{ceqn}
    \begin{equation} \label{eq:progress_SGS-CD}
        \Exp{F(\lambda^{k+1}) \mid \lambda^k} \leq 
        F(\lambda^k) - \frac{1}{2Ln} \sum_{i=1}^n \max_{\ell \in \Scal_i} \paren{\nabla_\ell F(\lambda^k)}^2. \hspace{-4pt}
    \end{equation} 
    \end{ceqn}
    
    Given this per-iteration progress, to proceed as we did for SU-CD we need to show 
    (i) that the sum on the right hand side of \eqref{eq:progress_SGS-CD} defines a norm, 
    and (ii) that strong convexity holds in its dual norm. 
    We start by defining the selector matrices $T_i$, which will significantly simplify notation.

    \begin{definition}[\textit{\textbf{Selector matrices}}] \label{def:selector_matrices}
        The selector matrices $T_i \in \Cbraces{0,1}^{N_i \times E}, i=1,\ldots,n$ select the coordinates of a vector in $\Rbb^E$ that belong to set $\Scal_i$. 
        Note that any vertical stack of the unitary vectors $\Cbraces{e_\ell^T}_{\ell \in \Scal_i}$ gives a valid $T_i$.
    \end{definition}
    
    We can now show that the sum in \eqref{eq:progress_SGS-CD} is a (squared) norm. 
    Since the operation involves applying $\max(\cdot)$ within each set $\Scal_i$, we will denote this norm $\normSM{x}$, where the subscript SM stands for ``Set-Max".
    
    \begin{lemma} 
        The function $\normSM{x}~\triangleq~\sqrt{\sum_{i=1}^n \norm{T_i x}_\infty^2}  
        = \sqrt{\sum_{i=1}^n \max_{j \in \Scal_i} x_j^2 } $ is a norm in $\Rbb^E$.
    \end{lemma}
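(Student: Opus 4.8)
The plan is to recognize $\normSM{\cdot}$ as being built from the $n$ coordinate-block seminorms $p_i(x) \triangleq \norm{T_i x}_\infty$ composed with the Euclidean norm on $\Rbb^n$, and to check the norm axioms in that form. First I would note that each $p_i = \norm{T_i\,\cdot\,}_\infty$ is a seminorm on $\Rbb^E$, being the composition of the linear map $x \mapsto T_i x$ with the norm $\norm{\cdot}_\infty$ on $\Rbb^{N_i}$; it is in general \emph{not} a norm because $T_i$ annihilates every coordinate outside $\Scal_i$. Setting $P(x) \triangleq (p_1(x),\dots,p_n(x)) \in \Rbb_{\geq 0}^n$, we have $\normSM{x} = \normtwo{P(x)}$, and the three properties to verify are non-negativity together with definiteness, absolute homogeneity, and the triangle inequality.

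Homogeneity is immediate, since $P(\alpha x) = \abs{\alpha}\,P(x)$ entrywise, giving $\normSM{\alpha x} = \abs{\alpha}\,\normSM{x}$. For the triangle inequality I would exploit that $\normtwo{\cdot}$ is monotone on the non-negative orthant: from the triangle inequality for each $p_i$ we get $\zeros \leq P(x+y) \leq P(x) + P(y)$ entrywise, hence $\normSM{x+y} = \normtwo{P(x+y)} \leq \normtwo{P(x)+P(y)} \leq \normtwo{P(x)} + \normtwo{P(y)} = \normSM{x} + \normSM{y}$, where the last inequality is just the ordinary triangle inequality for the Euclidean norm.

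Finally, definiteness is the only step where the set structure matters, so I would spell it out. Non-negativity is clear from the definition. If $\normSM{x} = 0$, then $\normtwo{P(x)} = 0$, so $\norm{T_i x}_\infty = 0$ for every $i$, i.e. $x_j = 0$ for all $j \in \Scal_i$ and all $i \in [n]$; since by Definition~\ref{def:set-wise_CD} every coordinate $\ell \in [E]$ lies in at least one $\Scal_i$, this forces $x = \zeros$. The converse is trivial. I do not expect a genuine obstacle here --- the argument is a routine verification --- but the point worth emphasizing is precisely this last implication: it uses the covering property $\bigcup_i \Scal_i = [E]$, and without it $\normSM{\cdot}$ would be only a seminorm.
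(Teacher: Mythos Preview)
Your proof is correct and follows the same overall plan as the paper---direct verification of the three norm axioms. The only noteworthy difference is in the triangle inequality: the paper's sketch invokes $\max_{j\in\Scal_i}(x_j^2+y_j^2)\le \max_{j\in\Scal_i}x_j^2+\max_{j\in\Scal_i}y_j^2$ together with $\sqrt{a+b}\le\sqrt a+\sqrt b$, which as written does not obviously close the argument (those two ingredients alone do not yield $\normSM{x+y}\le\normSM{x}+\normSM{y}$ without an additional Minkowski step), whereas your route via the entrywise seminorm inequality $p_i(x+y)\le p_i(x)+p_i(y)$, monotonicity of $\normtwo{\cdot}$ on $\Rbb_{\ge0}^n$, and then the Euclidean triangle inequality is self-contained and transparent. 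You also make explicit the one structural hypothesis the paper leaves as ``straightforward'', namely that definiteness rests on the covering property $\bigcup_i\Scal_i=[E]$ from Definition~\ref{def:set-wise_CD}.
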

    
    \begin{proof}
        Using $\max_{j \in \Scal_i} \paren{x_j^2 {+} y_j^2} {\leq} \max_{j \in \Scal_i} x_j^2 {+} \max_{j \in \Scal_i} y_j^2$ 
        and $\sqrt{a+b} \leq \sqrt{a} + \sqrt{b}$ we can show that $\normSM{\cdot}$ satisfies the triangle inequality.
        It is straightforward to show that $\normSM{\alpha x} = \abs{\alpha} \normSM{x}$ and 
        $\normSM{x} = 0$ iff $x = \zeros$.
    \end{proof}
    
    Following the proof of Theorem \ref{theo:rate_SU-CD}, we would like to show that $F$ is strongly convex in the dual norm $\normSM{\cdot}^*$. 
    Furthermore, we would like to compare the strong convexity constant $\sigma_\SM$ with $\sigma_A$ to quantify the speedup of SGS-CD with respect to SU-CD. 
    It turns out, though, that computing $\normSM{\cdot}^*$ is not easy at all; the main difficulty stems from the fact that sets $\Scal_i$ are overlapping (or non-disjoint), since each coordinate $\ell \equiv (i,j)$ belongs to both $\Scal_i$ and $\Scal_j$. The first scheme in Figure \ref{fig:example_sets} illustrates this fact for the 3-node clique. 

    \begin{figure}[t]
        \centering
        \includegraphics[trim={13cm 6cm 32cm 7cm}, clip, width=0.6\linewidth]{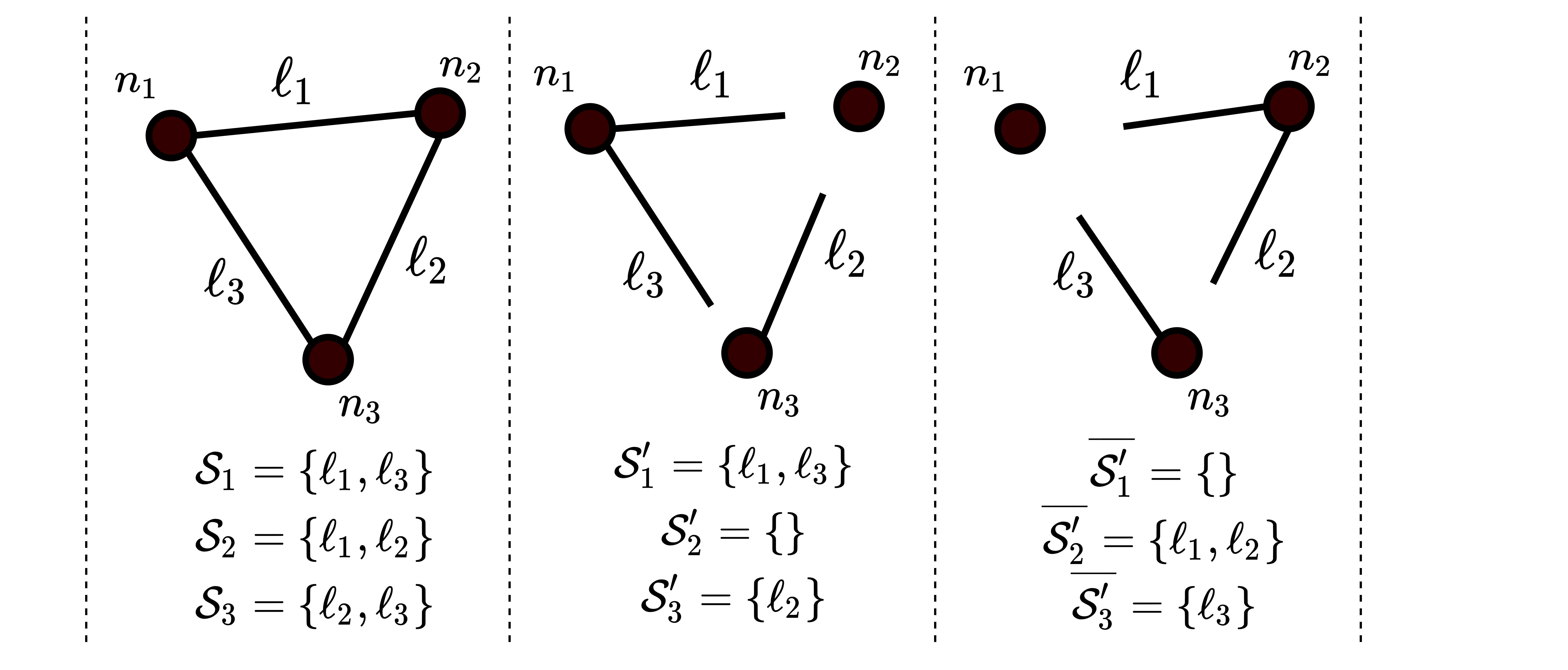}
        \caption{Example of sets $\Scal_i$ and \emph{one possibility} for $\Scal_i'$ and $\overline{\Scal_i'}$} 
        \label{fig:example_sets}
    \end{figure}

    To circumvent this issue, we define a new norm  $\normSMNO{\cdot}^*$ (``Set-Max Non-Overlapping") that we can directly relate to  $\normSM{\cdot}^*$ (Lemma \ref{lemma:ineq_SM_SMNO}) and whose value we can compute explicitly (Lemma \ref{lemma:value_norm_SMNO}), which will later allow us to relate its strong convexity constant $\sigma_\SMNO$ to $\sigma_A$ (Theorem \ref{theo:rate_SGS-CD}).

    \begin{definition}[\textit{\textbf{Norm $\normSMNO{\cdot}^*$}}]
        We assume that each coordinate $\ell \equiv (i,j)$ is assigned to only one of the sets $\Scal_i' \subseteq \Scal_i$ or
        $\Scal_j' \subseteq \Scal_j$, such that the new sets $\Cbraces{\Scal_i'}_{i=1}^n$ are non-overlapping (some sets can be empty), and all coordinates $\ell$ belong to exactly one set in $\Cbraces{\Scal_i'}$. 
        We name the selector matrices of these new sets $T_i'$, so that each possible choice of $\Cbraces{\Scal_i'}$ defines a different set $\{T_i'\}$.
        Then, we define
        \begin{ceqn}
        \begin{equation} \label{eq:def_norm_SMNO}
             \normSMNO{z}^* = \sup_{x} \Cbraces{ z^T x \; \biggr\rvert 
             \sqrt{ \sum_{i=1}^n \norm{T_i^* x}_\infty^2} \leq 1 },
        \end{equation}
        \end{ceqn}
        with the choice of non-overlapping sets
        \begin{ceqn}
        \begin{equation} \label{eq:optimal_Ti}
             \{T_i^*\} = 
             \arg \max_{ \{T_i'\} } \; \sum_{i=1}^n \norm{T_i' x}_\infty^2.
        \end{equation}
        \end{ceqn}
        
        Note that the maximizations in \eqref{eq:def_norm_SMNO} and \eqref{eq:optimal_Ti} are coupled. 
        We denote the value of $x$ that attains \eqref{eq:def_norm_SMNO} as $x_{\SMNO}^*$.
    \end{definition}
        
    The definition of sets $\Scal_i'$ corresponds to assigning each edge $\ell$ to one of the two nodes at its endpoints, as illustrated in the second scheme of Figure~\ref{fig:example_sets}. 
    Therefore, for each possible pair $\paren{\Cbraces{\Scal_h'}, \Cbraces{T_h'}}$ we can define a complementary pair $(\{\overline{\Scal_h'}\}, \{\overline{T_h'}\})$ 
    such that if $\ell \equiv (i,j)$ was assigned to $\Scal_i'$ in $\Cbraces{\Scal_h'}$, then it is assigned to $\overline{\Scal_j'}$ in $\{\overline{\Scal_h'}\}$.
    This corresponds to assigning $\ell$ to the opposite endpoint (node) to the one originally chosen, as shown in the third scheme of Figure~\ref{fig:example_sets}.
    With these definitions, it holds (potentially with some permutation of the rows):
    \begin{ceqn}
    \[ T_i = \begin{bmatrix} T_i' \\ \overline{T_i'} \end{bmatrix} =
        \begin{bmatrix} T_i' \\ \zeros \end{bmatrix} + 
        \begin{bmatrix} \zeros \\ \overline{T_i'} \end{bmatrix}, \; i=1,\ldots,n. \]
    \end{ceqn}
    
    We remark that the equality above holds for \emph{any} $\{T_i'\}$ corresponding to a feasible  assignment $\Cbraces{\Scal_i'}$, and in particular it hols for $(\Cbraces{\Scal_i^*}, \{T_i^*\})$.   
    This fact is used in the proof of the following lemma, which relates norms $\normSM{\cdot}^*$ and $\normSMNO{\cdot}^*$. 
    This will allow us to complete the analysis with $\normSMNO{\cdot}^*$, which we can compute explicitly (Lemma \ref{lemma:value_norm_SMNO}). 
    
    \begin{lemma} \label{lemma:ineq_SM_SMNO}
        The value of the dual norm of $\normSM{\cdot}$, denoted $\normSM{\cdot}^*$, satisfies
        $\paren{\normSM{\cdot}^*}^2 \geq \frac{1}{2} \paren{\normSMNO{\cdot}^*}^2$.
    \end{lemma}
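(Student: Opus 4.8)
The plan is to derive the stated inequality between dual norms from the \emph{reverse} inequality between the corresponding primal norms. Write $\normSMNO{\cdot}$ for the primal norm whose dual is $\normSMNO{\cdot}^*$; unwinding the definition, $\normSMNO{x}^2 = \max_{\{T_i'\}} \sum_{i=1}^n \norm{T_i' x}_\infty^2$, where the maximum ranges over all feasible non-overlapping assignments $\{T_i'\}$ of edges to endpoints (indeed, since $\{T_i^*\}$ attains this maximum, the constraint in \eqref{eq:def_norm_SMNO} is exactly $\normSMNO{x}\le 1$). I will establish the pointwise bound $\normSM{x} \le \sqrt{2}\,\normSMNO{x}$ for all $x\in\Rbb^E$, and then invoke the elementary duality fact that $\norm{\cdot}_a \le c\,\norm{\cdot}_b$ pointwise implies $\norm{\cdot}_a^* \ge \frac{1}{c}\,\norm{\cdot}_b^*$ (because the unit ball of $\norm{\cdot}_b$ scaled by $1/c$ is contained in that of $\norm{\cdot}_a$). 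Taking $c=\sqrt2$ and squaring then yields $\paren{\normSM{\cdot}^*}^2 \ge \frac12 \paren{\normSMNO{\cdot}^*}^2$.

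For the primal bound, fix $x$, let $\{T_i^*\}$ be an assignment attaining $\normSMNO{x}^2$, and let $\{\overline{T_i^*}\}$ be its complement (each edge reassigned to the opposite endpoint), which is again a feasible non-overlapping assignment. Using the row splitting $T_i = \left[\begin{smallmatrix} T_i^* \\ \overline{T_i^*}\end{smallmatrix}\right]$ recorded before the lemma, for each $i$ we have $\norm{T_i x}_\infty^2 = \max\paren{\norm{T_i^* x}_\infty^2,\ \norm{\overline{T_i^*} x}_\infty^2} \le \norm{T_i^* x}_\infty^2 + \norm{\overline{T_i^*} x}_\infty^2$. Summing over $i$, $\normSM{x}^2 = \sum_{i=1}^n \norm{T_i x}_\infty^2 \le \sum_{i=1}^n \norm{T_i^* x}_\infty^2 + \sum_{i=1}^n \norm{\overline{T_i^*} x}_\infty^2$. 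The first sum equals $\normSMNO{x}^2$ by the choice of $\{T_i^*\}$, and the second is at most $\normSMNO{x}^2$ because $\{\overline{T_i^*}\}$ is one of the assignments over which the defining maximum is taken; hence $\normSM{x}^2 \le 2\,\normSMNO{x}^2$, as required.

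The analytic content is merely $\max(a,b)\le a+b$ for $a,b\ge 0$, so the main (and only modest) obstacle is the bookkeeping forced by the fact that the optimal assignment $\{T_i^*\}$ depends on $x$. Before using it one should confirm that $\normSMNO{\cdot}$ --- a pointwise maximum of the finitely many expressions $\sqrt{\sum_i \norm{T_i' x}_\infty^2}$ --- is genuinely a norm: it separates points because in any assignment each coordinate lies in exactly one $\Scal_i'$, so each such expression dominates $\norm{x}_\infty$, and a pointwise maximum of norms is a norm. One must also take care to use the \emph{same} assignment $\{T_i^*\}$ both when splitting $T_i$ and when identifying $\sum_i \norm{T_i^* x}_\infty^2$ with $\normSMNO{x}^2$, and to note that the complementary assignment $\{\overline{T_i^*}\}$ is the specific feasible choice that makes the two sums simultaneously comparable to $\normSMNO{x}^2$. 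I do not expect the constant $2$ to be improvable by this route: in a symmetric situation the complementary assignment can be exactly as good as $\{T_i^*\}$, so the bound $\normSM{x}^2 \le 2\normSMNO{x}^2$ becomes tight.
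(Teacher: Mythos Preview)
Your proof is correct and follows essentially the same approach as the paper: both establish the primal inequality $\normSM{x}^2 \le 2\,\normSMNO{x}^2$ by splitting $T_i$ into the complementary selectors $T_i^*$ and $\overline{T_i^*}$ and using $\max(a,b)\le a+b$, then pass to the dual norms. Your presentation is in fact cleaner, since you invoke the standard fact that $\norm{\cdot}_a\le c\,\norm{\cdot}_b$ implies $\norm{\cdot}_a^*\ge c^{-1}\norm{\cdot}_b^*$ directly, whereas the paper argues via feasibility of $x_{\SMNO}^*/\sqrt{2}$ in the $\SM$-dual problem (and its concluding ``coordinate-wise $x_{\SM}^*\succeq \frac{1}{\sqrt 2}x_{\SMNO}^*$'' is not literally justified, though the intended conclusion is the same as yours).
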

    
    \begin{proof}
        By definition 
        \begin{ceqn}
        \[ \normSM{z}^* = \sup_x \Cbraces{ z^T x \; \biggr\rvert \sqrt{ \sum_{i=1}^n \norm{T_i x}_\infty^2 } \leq 1 }. \]
        \end{ceqn}
        
        By inspection we can tell that the $x$ that attains the supremum, denoted $x_{\SM}^*$, will satisfy 
        $\sum_{i=1}^n \norm{T_i x_{\SM}^*}_\infty^2 = 1$.
        We note now that 
        \begin{ceqn}
        \begin{equation} \label{eq:ineq_Ti}
            \sum_{i=1}^n \norm{T_i x}_\infty^2 
            = \sum_{i=1}^n \norm{\begin{bmatrix} T_i' \zeros \end{bmatrix} x + 
            \begin{bmatrix} \zeros \\ \overline{T_i'} \end{bmatrix} x}_\infty^2 
            \leq \sum_{i=1}^n \norm{T_i' x}_\infty^2 + \sum_{i=1}^n \norm{\overline{T_i'} x}_\infty^2
            \leq 2 \sum_{i=1}^n \norm{\widehat{T_i'} x}_\infty^2,
        \end{equation}
        \end{ceqn}
        with 
        \begin{ceqn}
        \begin{equation} \label{eq:max_over_Ti}
            \{\widehat{T_i'}\} = \arg\max_{\{T_i'\},\{\overline{T_i'}\}} 
        \paren{\sum_{i=1}^n \norm{T_i' x}_\infty^2, \sum_{i=1}^n \norm{\overline{T_i'} x}_\infty^2}.
        \end{equation}
        \end{ceqn}

        Note that if we evaluate \eqref{eq:max_over_Ti} at $x_{\SMNO}^*$, due to \eqref{eq:optimal_Ti} we have $\{\widehat{T_i'}\} = \{T_i^*\}$.
        Also, by inspection of problem \eqref{eq:def_norm_SMNO} we know that $x_{\SMNO}^*$ satisfies
        $\sum_{i=1}^n \norm{T_i^* x_{\SMNO}^*}_\infty^2 = 1$. 
        Therefore, \eqref{eq:ineq_Ti} says
        \begin{ceqn}
        \[ \frac{1}{2} \sum_{i=1}^n \norm{T_i x_{\SMNO}^*}_\infty^2 = 
        \sum_{i=1}^n \norm{T_i \frac{x_{\SMNO}^*}{\sqrt{2}}}_\infty^2 \leq 1, \]
        \end{ceqn}
        from where we conclude that coordinate-wise it must hold 
        $x_{\SM}^*~\succeq~\frac{1}{\sqrt{2}} x_{\SMNO}^*$,
        and thus $\normSM{z}^* \geq \frac{1}{\sqrt{2}} \normSMNO{z}^*$.
    \end{proof}
    
    The next lemma gives the value of $\normSMNO{x}^*$ explicitly, which will be needed to compare the strong convexity constant $\sigma_{\SMNO}$ with $\sigma_A$.
    
    \begin{lemma} \label{lemma:value_norm_SMNO}
        It holds that $\normSMNO{x}^* = \sqrt{\sum_{i=1}^n \norm{T_i^* x}_1^2}$.
    \end{lemma}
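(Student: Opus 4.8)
The plan is to treat this as a dual-norm computation for a norm with a disjoint block structure, namely the $\ell_2$-combination across $i$ of the $\ell_\infty$ norms of the blocks $\Scal_i^*$; its dual is the $\ell_2$-combination of the $\ell_1$ norms of the same blocks, which is exactly the claimed expression. The key structural fact I would use is the one that motivated introducing $\normSMNO{\cdot}^*$ in the first place: the sets $\Scal_i^*$ are non-overlapping and together exhaust $[E]$, so for any $z,x\in\Rbb^E$ the block pieces $T_i^*z$ (resp.\ $T_i^*x$) are over disjoint coordinates and can be handled, and chosen, independently.

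Concretely, I would first prove the ``$\le$'' direction. Since $\{\Scal_i^*\}$ partition $[E]$, write $z^Tx=\sum_{i=1}^n (T_i^*z)^T(T_i^*x)$, apply H\"older within each block, $(T_i^*z)^T(T_i^*x)\le\norm{T_i^*z}_1\norm{T_i^*x}_\infty$, set $b_i:=\norm{T_i^*x}_\infty\ge0$ so the feasibility constraint in \eqref{eq:def_norm_SMNO} reads $\sum_i b_i^2\le1$, and use Cauchy--Schwarz to get $z^Tx\le\sum_i\norm{T_i^*z}_1 b_i\le\sqrt{\sum_i\norm{T_i^*z}_1^2}$, hence $\normSMNO{z}^*\le\sqrt{\sum_i\norm{T_i^*z}_1^2}$. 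For the reverse direction I would exhibit a feasible maximizer: because the blocks are disjoint and cover all coordinates, I can set, on block $i$, $(T_i^*x)_\ell=c\,\norm{T_i^*z}_1\,\sign(z_\ell)$ for $\ell\in\Scal_i^*$ with $c=1/\sqrt{\sum_j\norm{T_j^*z}_1^2}$ (and $x=\zeros$ on any empty block; signs of zero entries are irrelevant, and $z=\zeros$ is trivial). Then $\norm{T_i^*x}_\infty=c\,\norm{T_i^*z}_1$, so $\sum_i\norm{T_i^*x}_\infty^2=1$ and $z^Tx=c\sum_i\norm{T_i^*z}_1^2=\sqrt{\sum_i\norm{T_i^*z}_1^2}$, so both inequalities above are tight. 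Renaming $z$ to $x$ gives the statement.

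The one step that is not routine is the coupling between the supremum over $x$ in \eqref{eq:def_norm_SMNO} and the $\arg\max$ in \eqref{eq:optimal_Ti} that \emph{defines} $\{T_i^*\}$: a priori, perturbing $x$ could change which non-overlapping partition is optimal, so the blocks are not obviously ``frozen'' while we optimize over $x$. I would resolve this by noting that it does not disturb the computation: the H\"older/Cauchy--Schwarz upper bound holds for \emph{every} feasible non-overlapping partition and hence for the optimal one, while the feasible $x$ constructed for the lower bound is, by construction, block-wise aligned with $\sign(z)$ with block-$\ell_\infty$ norms proportional to $\norm{T_i^*z}_1$ — precisely the configuration for which the plugged-in $\{T_i^*\}$ is consistent with \eqref{eq:optimal_Ti}. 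Equivalently, a first-order optimality argument shows that at the maximizing $x$ the single active constraint is $\sqrt{\sum_i\norm{T_i^*x}_\infty^2}\le1$, so \eqref{eq:def_norm_SMNO} reduces to the dual of the fixed block norm $x\mapsto\sqrt{\sum_i\norm{T_i^*x}_\infty^2}$, which is exactly what the displayed computation evaluates. No genuinely hard estimate is needed beyond this bookkeeping; the non-overlapping property of $\{\Scal_i^*\}$ is what makes the block-wise decomposition and the independent choice of $x$ on each block legitimate, and that is the whole reason this surrogate norm was introduced in place of $\normSM{\cdot}^*$.
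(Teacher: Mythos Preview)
Your proof is correct and follows essentially the same route as the paper: both exploit the non-overlapping structure of $\{\Scal_i^*\}$ to reduce the computation to the dual of a block $\ell_2(\ell_\infty)$ norm, yielding the block $\ell_2(\ell_1)$ norm. The paper argues more directly that the optimizer must be constant in magnitude on each block and then reduces to an $\ell_2$ dual, whereas you do the two-sided H\"older/Cauchy--Schwarz plus explicit witness construction; your added paragraph on the coupling in \eqref{eq:def_norm_SMNO}--\eqref{eq:optimal_Ti} is a point the paper acknowledges but does not spell out.
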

    
    \begin{proof}
        Since the sets $\{\Scal_i^*\}$ are non-overlapping and in \eqref{eq:def_norm_SMNO} norm $\norm{\cdot}_\infty$ is applied per-set, the entries $x_\ell$ of $x_{\SMNO}^*$ will have $\abs{x_\ell} = x^{(i)} \geq 0 \; \forall \; \ell \in \Scal_i^*$ and the sign will match that of the entries of $z$, i.e. $\sign(x_\ell) = \sign(z_\ell)$. 
        The maximization of \eqref{eq:def_norm_SMNO} then becomes 
        \begin{ceqn}
        \begin{equation*}
            \begin{aligned}
            & \maximize_{\Cbraces{x^{(i)}}} && \sum_{i=1}^n \sum_{\ell \in \Scal_i^*}  \paren{ \abs{z_\ell} \cdot x^{(i)} } \\
            & \st && \sqrt{\sum_{i=1}^n \paren{x^{(i)}}^2} \leq 1.
            \end{aligned}
        \end{equation*}
        \end{ceqn}
        
        Factoring out $x^{(i)}$ in the objective and noting that 
        $\sum_{\ell \in \Scal_i^*} \abs{z_\ell} = \norm{T_i^* z}_1$, we can define 
        $w = [x^{(1)}, \ldots, x^{(n)}]^T$ and $y = \Sbraces{ \norm{T_1^* z}_1, \ldots, \norm{T_n^* z}_1 }^T$ 
        so that \eqref{eq:def_norm_SMNO} now reads
        \begin{ceqn}
        \[ \normSMNO{z}^* = \sup_w \Cbraces{ y^T w \; \biggr\rvert \normtwo{w} \leq 1}. \]
        \end{ceqn}
        
        The right hand side is the definition of $\normtwo{\cdot}^*$, the dual of the L2 norm, evaluated at $y$. Since $\normtwo{\cdot}^* = \normtwo{\cdot}$, we have that
        $ \normSMNO{z}^* = \normtwo{y} = \sqrt{\sum_{i=1}^n \norm{T_i' z}_1^2} $.        
    \end{proof}
    
    We can now prove the linear convergence rate of SGS-CD. 
    
    \begin{theorem}[\textit{\textbf{Rate of SGS-CD}}] \label{theo:rate_SGS-CD}
        SGS-CD converges as
        \begin{ceqn}
        \begin{equation*} 
            \Exp{F(\lambda^{k+1}) \mid \lambda^k} - F(\lambda^*) \leq \\
            \paren{1 - \frac{2 \sigma_{\SMNO}}{Ln}} \Sbraces{F(\lambda^k) - F(\lambda^*)}, 
        \end{equation*}
        \end{ceqn}
        with 
        \begin{ceqn}
        \begin{equation} \label{eq:ineqs_sA_sSMNO}
           \frac{\sigma_A}{N_{\max}} \leq \sigma_{\SMNO} \leq \sigma_A.  
        \end{equation}
        \end{ceqn}
    \end{theorem}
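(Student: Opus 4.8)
The plan is to replay the proof of Theorem~\ref{theo:rate_SU-CD} almost verbatim, with $\normSM{\cdot}$ in the role of $\normtwo{\cdot}$ and the non-overlapping surrogate $\normSMNO{\cdot}^*$ (together with its primal $\normSMNO{\cdot}$) in the role of $\normAA{\cdot}$. Rewriting the per-iteration progress \eqref{eq:progress_SGS-CD} as $\Exp{F(\lambda^{k+1})\mid\lambda^k}\le F(\lambda^k)-\tfrac{1}{2Ln}\normSM{\nabla F(\lambda^k)}^2$, it suffices to establish the Polyak--{\L}ojasiewicz-type inequality \[ \normSM{\nabla F(\lambda)}^2 \;\ge\; 2\sigma_{\SMNO}\,\big[F(\lambda)-F(\lambda^*)\big]; \] substituting it into \eqref{eq:progress_SGS-CD} and subtracting $F(\lambda^*)$ then yields the claimed rate, exactly as in Theorem~\ref{theo:rate_SU-CD}.

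I would obtain this inequality in two steps. First, since each $\Scal_i'$ is a subset of $\Scal_i$, the set-wise maxima can only shrink, so $\normSMNO{z}\le\normSM{z}$ for every $z$, and in particular $\normSM{\nabla F(\lambda)}^2\ge\normSMNO{\nabla F(\lambda)}^2$; Lemma~\ref{lemma:ineq_SM_SMNO} supplies the matching reverse control $\normSM{z}\le\sqrt{2}\,\normSMNO{z}$, which is what guarantees that passing to the non-overlapping surrogate costs at most a constant factor. Second, I would take $\sigma_{\SMNO}$ to be the strong-convexity constant of $F$ with respect to $\normSMNO{\cdot}^*$ --- understood, as in Lemmas~\ref{lemma:SC_in_AA} and~\ref{lemma:equality_of_norms}, through the projector $\AA$, which is harmless precisely because $\nabla F(\lambda)\in\range(A^T)$ so the flat directions of $F$ never enter the argument --- and then minimize the strong-convexity inequality over $y$, as in \eqref{eq:guarantee_suboptim_SU}. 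Because the norm dual to $\normSMNO{\cdot}^*$ is $\normSMNO{\cdot}$, this minimization gives $\normSMNO{\nabla F(\lambda)}^2\ge 2\sigma_{\SMNO}[F(\lambda)-F(\lambda^*)]$, and composing with the first step produces the PL inequality; here the explicit value $\normSMNO{z}^*=\sqrt{\sum_i\norm{T_i^* z}_1^2}$ from Lemma~\ref{lemma:value_norm_SMNO} is what makes $\sigma_{\SMNO}$ a concrete quantity rather than an abstract one.

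For the two-sided bound \eqref{eq:ineqs_sA_sSMNO} I would compare $\normSMNO{\cdot}^*$ directly with $\normtwo{\cdot}$. Since the sets $\{\Scal_i^*\}$ partition $[E]$, one has $\sum_i\norm{T_i^* z}_2^2=\normtwo{z}^2$; combined with the elementary blockwise inequalities $\norm{T_i^* z}_2\le\norm{T_i^* z}_1\le\sqrt{N_i}\,\norm{T_i^* z}_2\le\sqrt{N_{\max}}\,\norm{T_i^* z}_2$ (Cauchy--Schwarz) and Lemma~\ref{lemma:value_norm_SMNO}, this gives $\normtwo{z}\le\normSMNO{z}^*\le\sqrt{N_{\max}}\,\normtwo{z}$ for all $z$. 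Feeding these two comparisons, together with the strong convexity of $F$ in $\normAA{\cdot}$ from Lemma~\ref{lemma:SC_in_AA} (which agrees with $\normtwo{\cdot}$ on $\range(A^T)$ by the argument of Lemma~\ref{lemma:equality_of_norms}), into the definition of $\sigma_{\SMNO}$ transfers the constant $\sigma_A$ across: the bound $\norm{\cdot}_1\le\sqrt{N_{\max}}\norm{\cdot}_2$ costs the factor $N_{\max}$ and produces $\sigma_{\SMNO}\ge\sigma_A/N_{\max}$, while $\norm{\cdot}_2\le\norm{\cdot}_1$ produces $\sigma_{\SMNO}\le\sigma_A$.

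The delicate point is the second step above. Unlike in the centralized Gauss--Southwell analysis of \cite{nutini2015coordinate}, the norm governing $\sigma_{\SMNO}$ is genuinely distinct from $\normtwo{\cdot}$, and since $F$ is flat along $\ker(A)$ (Lemma~\ref{lemma:no_SC}) there is no strong convexity in any full-rank norm on $\Rbb^E$ --- the statement only makes sense ``seen through'' $\AA$, exactly as in Lemma~\ref{lemma:SC_in_AA}. Keeping the primal/dual bookkeeping consistent through this projection, and invoking the overlapping-to-non-overlapping comparison of Lemma~\ref{lemma:ineq_SM_SMNO} (whose proof already absorbed the difficulty created by the overlap of the sets $\Scal_i$) at the right place, is where the care is needed; once those are in hand, the remaining manipulations are the same as in the proof of Theorem~\ref{theo:rate_SU-CD}.
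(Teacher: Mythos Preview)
Your argument is correct and lands on the same rate as the paper, and your derivation of \eqref{eq:ineqs_sA_sSMNO} is essentially identical to the paper's (blockwise $\ell_1/\ell_2$ comparisons on the partition $\{\Scal_i^*\}$, using Lemma~\ref{lemma:value_norm_SMNO}). The one place where you diverge from the paper is in how the PL inequality in $\normSM{\cdot}$ is obtained. The paper introduces an intermediate constant $\sigma_{\SM}$ for strong convexity in $\normSM{\cdot}^*$, uses Lemma~\ref{lemma:ineq_SM_SMNO} on the \emph{dual} norms to link $\sigma_{\SM}$ to $\sigma_{\SMNO}$, and only then minimizes over $y$ to produce \eqref{eq:guarantee_suboptim_SM}. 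You instead minimize the $\normSMNO{\cdot}^*$-strong-convexity inequality directly (bidual $=\normSMNO{\cdot}$) and then invoke the \emph{primal}-norm comparison $\normSMNO{z}\le\normSM{z}$, which is immediate from $\Scal_i^*\subseteq\Scal_i$. Your route is shorter and, notably, does not actually require Lemma~\ref{lemma:ineq_SM_SMNO} at all: the ``reverse control'' $\normSM{z}\le\sqrt{2}\,\normSMNO{z}$ you extract from it is never used in your PL derivation, so you may simply drop that sentence. What your shortcut gives up is any explicit statement about $\sigma_{\SM}$ itself; the paper's detour through Lemma~\ref{lemma:ineq_SM_SMNO} is what ties the overlapping-set constant back to the non-overlapping one, whereas you bypass $\sigma_{\SM}$ entirely.
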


    \begin{proof}
        We start by proving \eqref{eq:ineqs_sA_sSMNO} by showing that strong convexity in $\normAA{\cdot}$ implies strong convexity in $\normSMNO{\cdot}^*$, which will give the inequalities as a by-product of the analysis.
        Below we assume that $x \in \range(A^T)$; the results here can then be directly applied to the proofs above because $\normAA{\cdot}, \normSM{\cdot}, \normSMNO{\cdot}$ and their duals are applied to $\nabla F(\lambda)$, which is always in $\range(A^T)$ (Lemma \ref{lemma:equality_of_norms}). 
    
        For $x \in \range(A^T)$ it holds that 
        (Lemmas \ref{lemma:equality_of_norms} and \ref{lemma:value_norm_SMNO}):
        \begin{ceqn}
        \[ \normAA{x}^2 = \normtwo{x}^2 = \sum_{i=1}^E x_i^2 = \sum_{i=1}^n \normtwo{T_i^* x}^2 
        \text{\qquad and \qquad}
             \paren{\normSMNO{x}^*}^2 = \sum_{i=1}^n \norm{T_i^* x}_1^2. \]
        \end{ceqn}

        We also note that, using the Cauchy-Schwarz inequality and denoting $[v]_i$ the $i$\ts{th} entry of vector $v$, it holds both that
        \begin{ceqn}
        \[\sum_{i=1}^n \normtwo{T_i^* x}^2 \leq 
            \sum_{i=1}^n \Bigg( \sum_{j \in \Scal_i^*} |x_j| \Bigg)^2 =
            \sum_{i=1}^n \norm{T_i^* x}_1^2 \]
        \end{ceqn}
        and
        \begin{ceqn}
        \[ \sum_{i=1}^n \norm{T_i^* x}_1^2 
            = \sum_{i=1}^n \paren{ \ones^T 
            \bigg[ \Big| [T_i^*x]_1 \Big| , \ldots, \Big| [T_i^*x]_{N_i^*} \Big| \bigg]^T }^2 \\
            \stackrel{\text{C.S.}}{\leq}
            \sum_{i=1}^n N_i^* \normtwo{T_i^*x}^2
            \leq N_{\max} \sum_{i=1}^n \normtwo{T_i^*x}^2, \]
        \end{ceqn}
        
        \noindent
        where $N_i^* = \abs{\Scal_i^*}$.
        We can summarize these relations as 
        
        \begin{ceqn}
        \[ \frac{1}{N_{\max}} \paren{\normSMNO{x}^*}^2
        \leq \normAA{x}^2 \leq  \paren{\normSMNO{x}^*}^2. \]
        \end{ceqn}
        
        Using these inequalities in the strong convexity definitions, similarly to \cite{nutini2015coordinate}, we get both
        \begin{ceqn}
        \begin{align} \label{eq:lower_ineq}
            F(y) &\geq F(x) {+} \Abraces{\nabla F(x), y-x} {+} \frac{\sigma_A}{2} \paren{\normAA{y-x}}^2 \\
            &\geq F(x) {+} \Abraces{\nabla F(x), y-x} {+} \frac{\sigma_A}{2N_{\max}} \paren{\normSMNO{y-x}^*}^2, 
        \end{align}
        \end{ceqn}
        
        and
        
        \begin{ceqn}
        \begin{align} \label{eq:higher_ineq}
            F(y) &\geq F(x) {+} \Abraces{\nabla F(x), y{-}x} {+} \frac{\sigma_{\SMNO}}{2} \paren{\normSMNO{y{-}x}^*}^2 \\
            &\geq F(x) {+} \Abraces{\nabla F(x), y-x} {+} \frac{\sigma_{\SMNO}}{2} \paren{\normAA{y-x}}^2.
        \end{align} 
        \end{ceqn}
        
        Equation \eqref{eq:lower_ineq} says that $F$ is at least $\frac{\sigma_A}{N_{\max}}$-strongly convex in $\normSMNO{\cdot}^*$, and eq. \eqref{eq:higher_ineq} says that $F$ is at least $\sigma_{\SMNO}$-strongly convex in $\normAA{\cdot}$.
        Together they imply \eqref{eq:ineqs_sA_sSMNO}. 
        
        To get the rate of SGS-CD, and following the procedure of SU-CD, we need to lower-bound the per-iteration progress $\frac{1}{2Ln} \normSM{\nabla F(\lambda)}^2$   
        in \eqref{eq:progress_SGS-CD}. 
        For this we will use the strong convexity in $\normSM{\cdot}^*$, which we can obtain from the strong convexity that we just proved for $\normSMNO{\cdot}^*$, as shown next. 
        
        Stating that $F$ is at least $\sigma_{\SM}$-strongly convex in $\normSM{\cdot}^*$ and using Lemma \ref{lemma:ineq_SM_SMNO} we obtain:
        \begin{ceqn}
        \begin{align} \label{eq:SC_in_SM}
            F(y) 
            &\geq F(x) + \Abraces{\nabla F(x), y-x} + \frac{\sigma_{\SM}}{2} \paren{\normSM{y-x}^*}^2 \\
            &\geq F(x) + \Abraces{\nabla F(x), y-x} + \frac{\sigma_{\SM}}{2} \frac{1}{2} \paren{\normSMNO{y-x}^*}^2, 
        \end{align}
        \end{ceqn}
        from where we conclude that $\sigma_{\SM} = 2 \sigma_{\SMNO}$.
        
        Minimizing both sides of the first inequality in \eqref{eq:SC_in_SM} with respect to $y$ we obtain 
        \begin{ceqn}
        \begin{equation} \label{eq:guarantee_suboptim_SM}
            F(x^*) \geq F(x) - \frac{1}{2\sigma_{\SM}} \paren{\normSM{\nabla F(x)}}^2,
        \end{equation}
        \end{ceqn}
        which is analogous to \eqref{eq:guarantee_suboptim_SU}, and rearranging terms gives a lower bound on $\normSM{\nabla F(\lambda)}^2$. 
        Using this lower bound in \eqref{eq:progress_SGS-CD} and replacing $\sigma_{\SM} = 2 \sigma_{\SMNO}$ gives the rate of SGS-CD.
    \end{proof}

    Theorem \ref{theo:rate_SGS-CD} states that SGS-CD can be up to $N_{\max}$ times faster than SU-CD.
    This result is analogous to that of \cite{nutini2015coordinate} for the GS rule compared to uniform sampling in centralized~CD.

    Although this maximum speedup is an upper bound and may not always be achievable, we can think of the following scenario where this gain is attained: let all sets have the same size $|\Scal_i| = N_{\max} \; \forall i$, exactly $m$ out~of the $N_{\max}$ coordinates in each set have $\nabla_m F(\lambda) = 0$, and only one $\ell$ have $\nabla_\ell F(\lambda) \neq 0$. 
    In this case, on average \emph{only $\frac{1}{N_{\max}}$ times will SU-CD choose the coordinate that gives some improvement}, while SGS-CD will do it at all iterations.

    Note that this example requires the gradients of all coordinates to be independent, which is not verified in the decentralized optimization setting: according to eq. \eqref{eq:coord_gradient}, for a $\nabla_m F$ to be zero, it must hold that $\nabla f_i^* = \nabla f_j^*, m \equiv (i,j)$. 
    But unless this equality holds for \emph{all} $(i,j) \in \Ecal$ (i.e., unless the minimum has been attained), $\lambda$ will continue changing, and the $\nabla f_i^*$ will differ.
    Thus, the gains of SGS-CD in this setting may not attain the upper bound.
    
    Nevertheless, when it comes to parallel distributed setups, the coordinates are not necessarily coupled as in the decentralized case, and thus the $N_{\max}$ speedup of SGS-CD is still achievable, as shown in our simulations below.

    \subsection{Case $d > 1$} \label{sec:d_larger_1}
    
    To extend the proofs above for $d~>~1$, the block~arrays $\Lambda$ and  $U_i$ should be used instead of $A$ and $u_i$, and the selector matrices $T_i$ should be redefined in the same way. 
    Then, all the operations that in the proofs above are applied \emph{per entry} (scalar coordinate) of the vector $\lambda$, should now be applied to \emph{the magnitude} of each vector (or ``block" \cite{nutini2017let}) coordinate $\lambda_\ell \in \Rbb^d$ of $\lambda \in \Rbb^{Ed}$. 
    Also, since $\nabla_m F \in \Rbb^d$, in this case the GS rule becomes 
    $\argmax_{m \in \Scal_i} \normtwo{\nabla_m F(\lambda)}^2$.

\section{Numerical Results} \label{sec:numerical_results}

    \begin{figure*}[t]
	\centering
	\begin{subfigure}{0.51\linewidth}
		\centering \includegraphics[trim={1mm 1mm 1mm 1mm},clip,width=\linewidth]{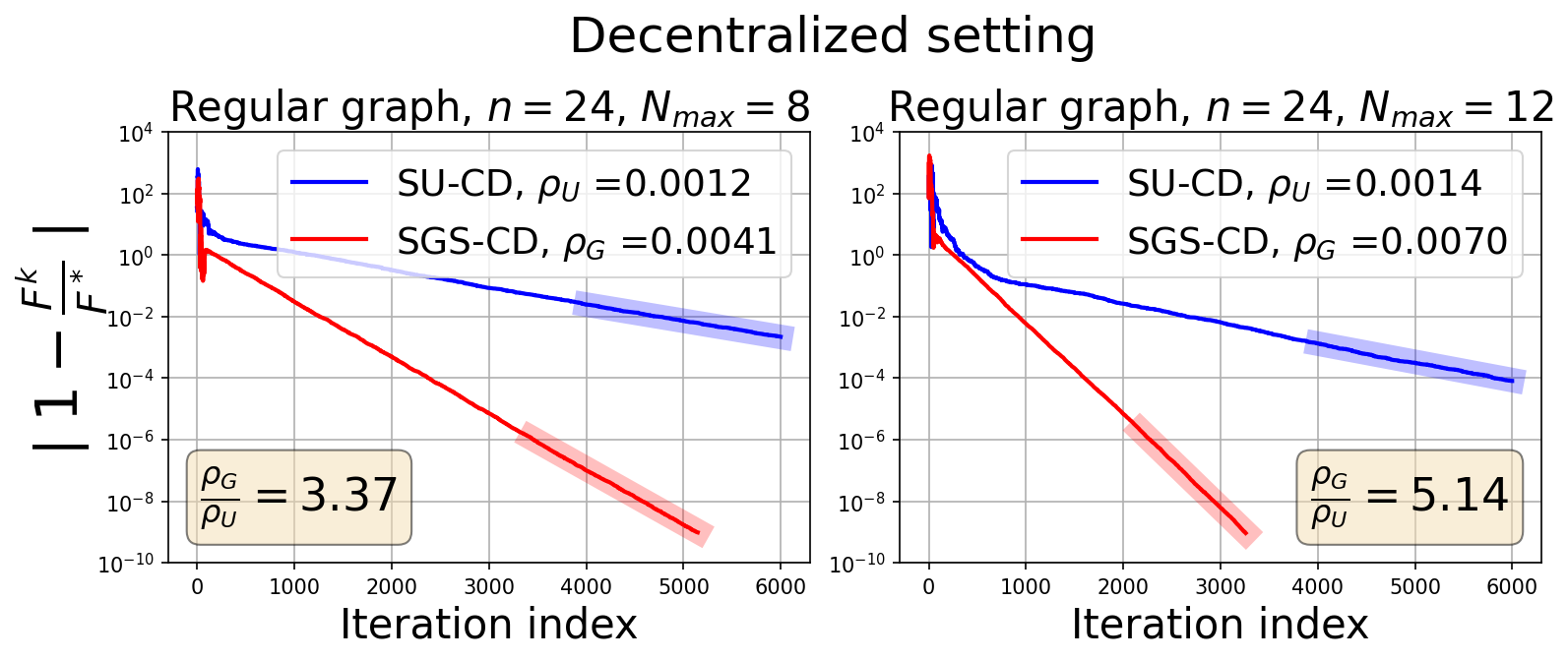}
	\end{subfigure}
  	\begin{subfigure}{0.48\linewidth}
    	\centering \includegraphics[trim={1mm 1mm 1mm 1mm},clip,width=\linewidth]{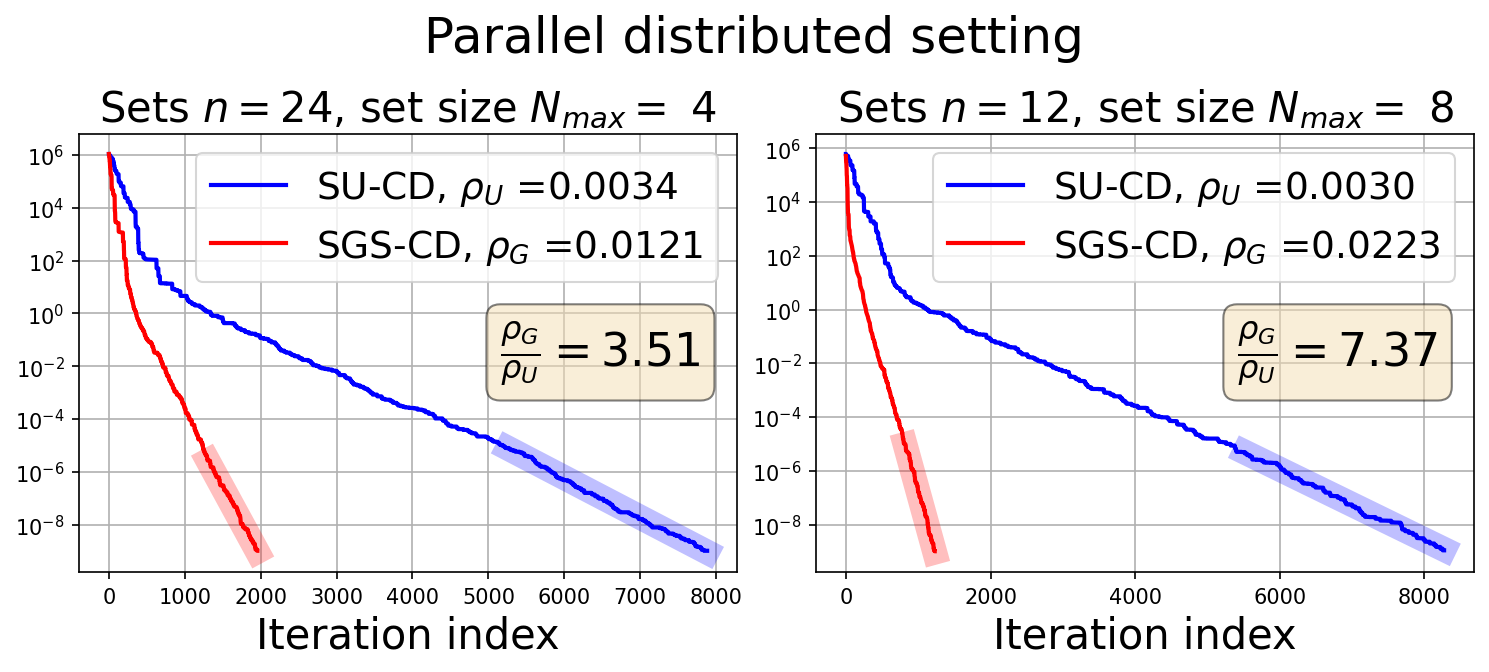}
  	\end{subfigure} 
    \caption{Comparison of the convergence rates of SU-CD and SGS-CD in two settings: decentralized optimization over a network (left plots), and parallel distributed computation with parameter server (right plots). 
    Given the linear suboptimality reduction 
    $F(\lambda^k)-F(\lambda^*) \leq (1-\rho)^k[F(\lambda^0)-F(\lambda^*)]$, 
    the thick transparent lines show the part of the curves used to estimate $\rho_U$ and $\rho_G$ for SU-CD and SGS-CD, respectively.
    The ratio 
    $\frac{\rho_G}{\rho_U}$ 
    increases notably with $N_{\max}$, in agreement with the theory.}
    \label{fig:simulation}
    \end{figure*}
    
    Figure \ref{fig:simulation} shows the remarkable speedup  of SGS-CD with respect to SU-CD in both the decentralized (left plots) and the parallel distributed (right plots) settings\footnote{The code is available at \url{https://github.com/m-costantini}}. 
    
    For the decentralized setting we created two regular graphs of $n=24$ nodes and degrees $N_{\max} = 8$ and 12, respectively. 
    The local functions were $f_i(\theta) = \theta^T c I_d \theta$ with $d = 5$, and $c=50$ if $(i$ modulo $N_{\max}) = 0$ and $c=1$ otherwise,~where $i$ is the index of each node.
    We chose these $f_i$ so that each node would have (approximately) one neighbor out of the $N_{\max}$ with whom the coordinate gradient would have maximum disagreement, thus maximizing the chances of observing differences between SU-CD and SGS-CD.
    
    For the parallel distributed setting, we created a problem that was separable per-coordinate, and we tried to recreate the conditions described in the previous section to approximate the $N_{\max}$ gain. 
    We chose $F(x) = x^T D x$ with $x \in \Rbb^d$ and $d = 48$. 
    Matrix $D$ was diagonal with its non-zero entries sampled from $\Ncal(10, 3)$. 
    We then created $n$ sets of $N_{\max}$ coordinates such that each coordinate belonged to exactly two sets, similarly to the parallel distributed scenario with parameter server where each worker has access to a subset of the coordinates only. 
    We simulated two different distributions of the $d=48$ coordinates: one with $n=24$ sets of $N_{\max} = 4$ coordinates each, and another with $n=12$ sets of $N_{\max} = 8$ coordinates each. 
    Following the reasoning in the previous section, we set the initial value of $(N_{\max}-1)$ coordinates in each set to $x_m^0 = 1$ (close to the optimal value $x_m^*=0$), and the one remaining to $x_\ell^0 = 100$ (far away from $x_\ell^*=0$).
    
    The plots in Figure \ref{fig:simulation} show the steep rate gain of SGS-CD with respect to SU-CD as $N_{\max}$ increases. 
    To quantify this gain we denoted $(1-\rho)$ the suboptimality reduction factor and we estimated $\rho_U, \rho_G$ for SU-CD and SGS-CD, respectively, from the last third of the suboptimality curves. 
    Theorem \ref{theo:rate_SGS-CD} says that $1 \leq \frac{\rho_G}{\rho_U} \leq N_{\max}$, and indeed, this is verified in both settings.
    In particular, for the decentralized setting $\frac{\rho_G}{\rho_U}$ is approximately in the middle of this range for both regular graphs.
    In the parallel distributed setting, however, the ratio is much closer to $N_{\max}$, as predicted.

\section{Discussion} \label{sec:conclusion}

    We have presented the class of \emph{set-wise CD} algorithms, where in a multi-agent system workers are allowed to modify only a subset of the total number of coordinates at each iteration. 
    These algorithms are suitable for asynchronous decentralized optimization and distributed parallel optimization.
    We studied specifically two set-wise CD variants, SU-CD and SGS-CD, which required developing a new methodology that extends previous results on CD methods. 
    
    We obtained the convergence rates of SU-CD and SGS-CD for smooth and strongly convex functions $f_i$ and showed that they are analogous, except for the network-related parameters, to those given in \cite{nutini2015coordinate} for their centralized counterparts. 
    More precisely, we showed that SGS-CD can be up to $N_{\max}$ (the size of the largest coordinate set) times faster than SU-CD; we further elaborated on the conditions under which such speedup may be attainable, and confirmed these predictions with numerical simulations.
    
    A limitation of SGS-CD with respect to SU-CD is that all the neighbors of the activated node $i$ must compute their $\nabla f_h^*$ and send it to node $i$ (line \ref{line:all_neighbors_send_grads} in Alg. \ref{alg:SGS-CD}).  
    This additional overhead with respect to SU-CD is analogous to that of the GS rule in centralized CD, which is the reason why GS makes sense only for problems with certain separability and sparsity structures \cite{nutini2015coordinate, nutini2017let}.
    Designing algorithms that approximate SGS-CD at the cost of SU-CD is a subject of future work.
    
    A possibility that was not accounted for in this study is letting the nodes use different stepsizes to update each $\lambda_\ell$ \cite{nutini2017let}. 
    Indeed, function $F$ is coordinate-wise smooth with constant $L_\ell = \paren{\frac{1}{\mu_i} + \frac{1}{\mu_j}}$ for $\ell \equiv (i,j)$; this could be used by each node to choose a different stepsize $\eta_\ell \geq \frac{1}{L}$ for each $\lambda_\ell$, which would make convergence faster. 
    Methods to estimate the per-coordinate smoothness when it is not known a priori were discussed in \cite{nesterov2012efficiency, nutini2017let}.
    
    Another way of obtaining faster convergence is to use Nesterov acceleration, as done in \cite{hendrikx2019accelerated}, now \emph{on top} of the smart neighbor choosing rule. 
    Although this would entail partially sacrificing the complete lack of coordination allowed by the set-wise CD algorithms presented here (because acceleration couples the coordinate updates), combined with per-coordinate specific stepsizes and well-designed neighbor sampling rules it would open the possibility of obtaining \emph{the fastest} decentralized set-wise CD algorithm, similarly to recent results for accelerated centralized CD \cite{nesterov2017efficiency, allen2016even}.

\bibliographystyle{ieeetr}
\bibliography{bibliography}

\end{document}